\newtheorem{theorem}{Theorem}[section]
\newtheorem{lemma}[theorem]{Lemma}
\newtheorem{corollary}[theorem]{Corollary}
\newtheorem{proposition}[theorem]{Proposition}
\newtheorem{example}[theorem]{Example}
\theoremstyle{definition}
\newtheorem{definition}[theorem]{Definition}
\newtheorem{remark}[theorem]{Remark}
\newcommand{\remove}[1]{}
\DeclareMathOperator{\lcm}{\rm lcm}  
\DeclareMathOperator{\comp}{\rm comp} 
\DeclareMathOperator{\pos}{\rm pos}
\DeclareMathOperator{\Rn}{{\mathbb R^{\it n}}}
\DeclareMathOperator{\R2}{{\mathbb R^{2}}}
\DeclareMathOperator{\den}{\rm den}
\DeclareMathOperator{\conv}{\rm conv}
\DeclareMathOperator{\rank}{\rm rank}
\DeclareMathOperator{\orb}{\rm orb}
 \title[Classifying $GL(2,\mathbb Z) \ltimes \mathbb Z^{2}$-orbits]
{Classifying $GL(2,\mathbb Z) \ltimes \mathbb Z^{2}$-orbits 
by subgroups of $\mathbb R$}
\author{Daniele Mundici}
\date{\today}
\address[D. Mundici]{Department of Mathematics ``Ulisse Dini'' \\
University of Florence\\
Viale Morgagni 67/A \\
I-50134 Florence \\
Italy}
\email{mundici@math.unifi.it}
\begin{document}

\thanks{2000 {\it Mathematics Subject Classification.}
Primary:   37C85.  Secondary: 11B57, 22F05, 37A45, 06F20, 46L80.}
\keywords{Affine group over the integers,  orbit,
$GL(2,\mathbb Z) \ltimes \mathbb Z^{2}$-orbit, 
complete invariant,  rational simplex, regular cone,  
Farey sequence, 
totally ordered abelian group,
AF C$^*$-algebra}

\begin{abstract}
Let  $\mathcal G_2$ denote the affine group
$GL(2,\mathbb Z) \ltimes \mathbb Z^{2}$.
For every  point $x=(x_1,x_2) \in \R2$ let  
$\orb(x)=\{y\in\R2\mid y=\gamma(x)$ for some
$\gamma  \in \mathcal{G}_2 \}$. 
Let $G_{x}$ be the subgroup of the additive group
$\mathbb R$ generated by $x_1,x_2, 1$. 
If    $\rank(G_x)\in \{1,3\}$ then 
$\orb(x)=\{y\in\R2\mid G_y=G_x\}$.
If $\rank(G_x)=2$, knowledge of 
$G_x$ is not   sufficient in general 
to uniquely recover  $\orb(x)$: rather,  $G_x$
classifies precisely  $\max(1,\phi(d)/2)$
different orbits, where $d$ is the denominator
of the smallest positive nonzero rational in $G_x$
and $\phi$ is Euler function.
To get a complete classification, polyhedral geometry 
provides  an integer  $c_x\geq 1$  such that
$\orb(y)=\orb(x) $    iff
$(G_{x},c_{x})=(G_{y},c_{{y}})$.   
\end{abstract}

\maketitle

\section{Introduction} 
Throughout we   let
$\mathcal G_n=GL(n,\mathbb Z) \ltimes \mathbb Z^{n}$ 
denote the group of affine  transformations of the form
$
x\mapsto Ux + t \,\, \mbox{for} \,\, x\in \mathbb R^n,
$
 where $t\in \mathbb Z^n$
 and  $U$ is an integer  $n \times n$ matrix with
 $\det(U)=\pm 1$.   
By an ``orbit''   we will mean a
$\mathcal G_n$-orbit. 
The dimension  of the ambient space  $\Rn$ will always be clear from
the context.
We let $\orb(x)$  denote the
orbit of $x\in \Rn.$  Thus
$$
\orb(x)=\{y\in \Rn\mid y=\gamma(x) \mbox{ for some }\,
\gamma\in \mathcal G_n\}.
$$

This paper is devoted to the classification of
$\mathcal G_2$-orbits:  to 
every point in  $\R2$ we will assign an invariant,
in such a way that two points have the same orbit iff
they have the same invariant.
For each $x=(x_1,\ldots,x_n)\in \Rn$,
the  main invariant considered in this paper is the group
$$
G_x=\mathbb Z+\mathbb Zx_1+\mathbb Zx_2+\dots+\mathbb Zx_n
$$
generated by $1,x_1,x_2,\ldots,x_n$ in the
 additive group
$\mathbb R$.
In Propositions   \ref{proposition:generic} and
  \ref{proposition:rank1}  (also see Remark \ref{remark:generalize})
   it is shown that $G_x$  
 completely classifies  the orbit of any
 $x\in\Rn,\,\,\,n\geq 2,$ whenever $\rank(G_x)$ equals
  1 or   $n+1$.  

 If   $x\in\R2$ and  $\rank(G_x)=2$,
 the invariant  $G_x$ is not complete for $\orb(x).$
 Rather,   $G_x$  classifies precisely  $\max(1,\phi(d)/2)$
different orbits, where $d$ is the denominator
of the smallest positive nonzero rational in $G_x$
and $\phi$ is Euler function. To get a complete
  invariant  we
  assign to $x$ an integer $c_x \geq 1$
and prove in Theorem \ref{theorem:rank2} that
  $\orb(x)=\orb(y)$  iff
$(G_x,c_x)=(G_y,c_y).$   In Corollary \ref{corollary:tutti}
we describe the totality   of pairs  $(G_x,c_x)$ arising
as complete classifiers of the totality  
of orbits  of points of $\R2$. The geometric significance of
$c_x$ is discussed in Section 5.

While the study of the orbits of $\mathcal G_2$  and  of 
its  subgroups   $GL(2,\mathbb Z)$,   $SL(2,\mathbb Z),$
  $SL(2,\mathbb Z)_+$ 
requires techniques from various mathematical areas 
\cite{dan, gui, launog, nog2002, nog2010, wit},
a first possible reason of interest in our 
classification  stems
from  the novel  role played by
  polyhedral geometry  \cite{ewa},
 through the fundamental notion
of  regular cone and regular simplex.
(As is well known, a regular fan is a complex
of regular cones.)  Secondly and thirdly,  in a final section we will 
apply  our classification
to the analysis of germinal ideals of
 lattice-ordered abelian groups with strong order unit
\cite{bigkeiwol},\cite{mun-dcds}
and of the Farey-Stern-Brocot AF C$^*$-algebra,
\cite{mun-adv,boc, eck,mun-mjm}.

\section{Rank 3 and Rank 1}
Fix  $n=1,2,\ldots$.
  In case  $x\in \Rn$  and $\rank(G_x)=n+1$, 
the following  elementary
result
  shows that the
group $G_x$  is a complete classifier of the orbit of $x$.
Its  proof is left to the reader.

 \begin{proposition}
 \label{proposition:generic}
Let  us say that a
 point $x =(x_1,\ldots,x_n) \in \mathbb R^n$  is   
{\rm totally irrational}  if  the $(n+1)$-tuple  $(x_1,\ldots,x_n,1)$
 is rationally independent.  In other words,
 $\rank(G_x)=n+1$. Whenever  $x$
is totally irrational,
$\orb(x)$
 is  completely classified by $G_x$.
Thus in particular, if  $x \in \R2$ and
$\rank(G_x)=3$   then 
$\orb(x)=\{y\in\R2\mid G_y=G_x\}$.
 \end{proposition}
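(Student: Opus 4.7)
My plan is to prove both implications directly, exploiting the structural fact that $G_x$ is a free abelian group of rank $n+1$ with $(1,x_1,\ldots,x_n)$ as a distinguished $\mathbb{Z}$-basis.

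First I would dispose of the easy direction: if $y=Ux+t$ with $U\in GL(n,\mathbb{Z})$ and $t\in\mathbb{Z}^n$, each component of $y$ is an integer combination of $x_1,\ldots,x_n$ and $1$, so $G_y\subseteq G_x$; the reverse inclusion follows by applying the inverse affine map $z\mapsto U^{-1}z-U^{-1}t$, which also belongs to $\mathcal G_n$. This shows $\orb(x)=\orb(y)\Rightarrow G_x=G_y$ and requires no hypothesis on the rank.

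For the converse, assume $x,y\in\Rn$ are both totally irrational with $G:=G_x=G_y$. Then $(1,x_1,\ldots,x_n)$ and $(1,y_1,\ldots,y_n)$ are two $\mathbb{Z}$-bases of the same rank-$(n+1)$ free abelian group $G$. I would expand $1$ and each $x_j$ in the $y$-basis as
\[
1=\alpha_0+\sum_i\alpha_i y_i,\qquad x_j=b_j+\sum_i a_{ji}y_i
\]
with integer coefficients. Rational independence of $(1,y_1,\ldots,y_n)$ forces $\alpha_0=1$ and all $\alpha_i=0$; hence, writing $A=(a_{ji})\in M_n(\mathbb{Z})$ and $b=(b_j)\in\mathbb{Z}^n$, we obtain $x=Ay+b$. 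Symmetrically $y=A'x+b'$ for some integer $A',b'$, and substituting yields $x=AA'x+(Ab'+b)$; rational independence of $(1,x_1,\ldots,x_n)$ now forces $AA'=I_n$ and $Ab'+b=0$, so $A\in GL(n,\mathbb{Z})$. Then $\gamma(z):=A^{-1}z-A^{-1}b$ belongs to $\mathcal G_n$ and satisfies $\gamma(x)=y$. The second assertion of the proposition is the specialization $n=2$.

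The only real subtlety, small as it is, is the use of total irrationality to pin down the constant term of the change of basis: without it an arbitrary $\mathbb{Z}$-basis change of $G$ could mix $1$ nontrivially with the $x_i$, producing a $GL(n+1,\mathbb{Z})$ transformation that does not descend to an affine transformation of $\Rn$ preserving the ``$1$''-coordinate. This is in fact precisely why the rank-$2$ case treated in Theorem \ref{theorem:rank2} needs the extra invariant $c_x$.
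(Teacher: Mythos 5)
Your proof is correct; the paper declares this proposition elementary and leaves its proof to the reader, and your change-of-basis argument (both $(1,x_1,\dots,x_n)$ and $(1,y_1,\dots,y_n)$ are $\mathbb{Z}$-bases of the same group, and total irrationality pins the constant term so the basis change descends to an affine map) is exactly the intended one. The only step you leave implicit is that $G_y=G_x$ of rank $n+1$ already forces $(1,y_1,\dots,y_n)$ to be rationally independent — a free abelian group of rank $n+1$ generated by $n+1$ elements has those elements as a basis — which you need before expanding in the $y$-basis.
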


To deal with the remaining cases   
we  introduce some basic tools from
polyhedral geometry \cite{ewa},
\cite{mun-dcds}, featuring the notion of regular cone
and simplex.

 \subsection*{Rational points, simplexes and cones}
A point $y=(y_1,\ldots,y_n) \in \mathbb \Rn$
is said to be {\it rational}  if $y_i\in\mathbb Q$
for each  $i=1,\ldots,n$.  Equivalently,  $\rank(G_x)=1.$
By  the  {\it denominator}  $\den(y)$ of the rational point
  $y=(y_1,\ldots,y_n) $
 we understand the least common denominator 
  of its coordinates. 
Observe that, for each rational point $y\in \mathbb R^n$,
  $G_y$ is the
 group of all integer multiples of 
 $1/\den(y)$.
 A simplex
 $T\subseteq \Rn$ is said to be {\it rational}
if   each vertex  
of $T$  is a  rational point.  
A {\it rational hyperplane} $H \subseteq \mathbb R^n$
is  a set of the form 
$ 
H =\{z \in \mathbb R^{n}   \mid  h\circ z = k\},
$ 
    for some nonzero vector
     $h\in \mathbb Q^n$ and $k\in \mathbb Q$.
 %
 %
 %
   A {\it rational affine space} $A$ in  $\mathbb R^n$
is  the intersection
of rational hyperplanes in $\mathbb R^n$.
 If the origin belongs to $A$
we say that $A$ is {\it homogeneous}.

%
%


 Following \cite[p.6]{ewa},
given  vectors $v_1,\ldots,v_s \in \mathbb R^n$ 
 we write
$$
\pos[v_1 ,\ldots, v_s]=
\mathbb R_{\geq 0}\,v_1
+\cdots+ \mathbb R_{\geq 0}\, v_s
$$
for their {\it positive hull}
in $\mathbb R^n$.
A nonzero integer vector $v\in \mathbb Z^n$ is said to be
{\it primitive}  (``visible'' in \cite{harwri})  if it is 
minimal,  as a nonzero   integer vector,  along its {\it ray} 
$\pos[v]$. 
Further, \cite[p.146]{ewa},  for 
  $t=1,2,\ldots,n$,
a  {\it $t$-dimensional
rational simplicial cone} in $\mathbb R^n$ is a set $C
\subseteq \mathbb R^n$ of the form 
$
    C=  \pos[w_1 ,\ldots, w_t],
$
for linearly independent primitive integer vectors 
$w_1 ,\ldots, w_t \in \mathbb Z^n$.  
The vectors $w_1 ,\ldots, w_t$ are called the
{\it primitive generating vectors} of $C$.  They are uniquely
determined by $C$.  
By a {\it face } of $C$ we mean the positive hull
of a subset  of $\{w_1 ,\ldots, w_t\}.$
For the sake of completeness we stipulate that
the face of  $C$  determined by the empty
set is the singleton  $\{0\}$.  This is the only
zerodimensional cone in  $\mathbb R^n$. 
By definition, the  {\it homogeneous correspondent}
of $y$ is the primitive integer  vector 
 $$
 \tilde y = (\den(y)\cdot y_1,
 \ldots, \,\den(y)\cdot y_n,\,\,\den(y)) =
  \den(y)(y,1)\in \mathbb Z^{n+1}.
  $$
Given  $k=0,1,\ldots$ and
          a rational $k$-simplex 
	$T = \conv(v_0, \ldots, v_k) \subseteq
\mathbb R^{n}$,  
we denote by
$T^\uparrow$
the positive hull in $\mathbb R^{n+1}$
of the homogeneous correspondents of the
vertices of $T$.
 In symbols, 
$$
T^{\uparrow}  =\pos[\tilde v_0,
\ldots,\tilde v_k]
= \mathbb R_{\geq 0}\tilde v_0+\dots
+ \mathbb R_{\geq 0}\tilde v_k\,\,\subseteq \mathbb R^{n+1}. 
$$
We say that $T^\uparrow$ is the
{\it  (rational simplicial) cone  of $T$}.
Note that $\dim(T^{\uparrow})=k+1.$

\subsection*{Regularity}
 In rational  polyhedral topology the following
notion is fundamental, \cite[p.146-147]{ewa}:

Let  $m=1,2,\ldots$  and  $t=1,\ldots,m.$
Then a $t$-dimensional  rational simplicial cone
$C  =
\pos[d_1 ,\ldots, d_t] \subseteq \mathbb R^m$
is said to be  {\it regular} 
if the set $\{d_1 ,\ldots, d_t\}$ of its
primitive generating vectors
can be extended to a basis of
the free abelian group $\mathbb Z^m$ of integer
points in $\mathbb R^m$. 
A rational simplex  
$T\subseteq \Rn$
is said to be  {\it regular}  ({\it unimodular} in \cite{mun-dcds})
if so is its associated cone
    $T^\uparrow\subseteq \mathbb R^{n+1}.$

\medskip
The following elementary lemma will be our key tool
to construct maps 
$\gamma\in \mathcal G_n$:

\begin{lemma}
\label{lemma:maps}
Let $T=\conv(v_1,\ldots,v_{n+1})$ and
$T'=\conv(v'_1,\ldots,v'_{n+1})$ be
regular $n$-simplexes in $\Rn$, with
$\den(v_i)=\den(v'_i)$ for all $i=1,\ldots, n+1$.  
Then for a unique  $\gamma\in \mathcal G_n$
we have  $\gamma(v_1)=v'_1,\ldots,\gamma(v_{n+1})=v'_{n+1}.$
 \end{lemma}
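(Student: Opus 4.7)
The plan is to lift everything to homogeneous coordinates in $\mathbb Z^{n+1}$, realizing $\mathcal G_n$ as the subgroup of $GL(n+1,\mathbb Z)$ that stabilizes the last-coordinate functional. The main observation is that regularity of $T$ and $T'$ is precisely the statement that the tuples of homogeneous correspondents are $\mathbb Z$-bases of $\mathbb Z^{n+1}$, while the denominator hypothesis is precisely the statement that the linear map matching these two bases preserves the last coordinate.

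First I would replace each $v_i$ by its homogeneous correspondent $\tilde v_i\in\mathbb Z^{n+1}$, and similarly for the $v'_i$. By the very definition of regularity, applied to the top-dimensional cones $T^{\uparrow}$ and $T'^{\uparrow}$, both $\{\tilde v_1,\ldots,\tilde v_{n+1}\}$ and $\{\tilde v'_1,\ldots,\tilde v'_{n+1}\}$ are $\mathbb Z$-bases of $\mathbb Z^{n+1}$. Hence there is a unique $\Gamma\in GL(n+1,\mathbb Z)$ with $\Gamma(\tilde v_i)=\tilde v'_i$ for all $i$.

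Next I would show that $\Gamma$ preserves the last coordinate. Let $\pi\colon\mathbb Z^{n+1}\to\mathbb Z$ denote projection onto the last coordinate. The hypothesis $\den(v_i)=\den(v'_i)$ says exactly that $\pi(\tilde v_i)=\pi(\tilde v'_i)$; hence $\pi\circ\Gamma$ and $\pi$ agree on the basis $\{\tilde v_i\}$, and therefore on all of $\mathbb Z^{n+1}$. This forces the block form
\[
\Gamma=\begin{pmatrix} U & t\\ 0 & 1\end{pmatrix},\qquad U\in GL(n,\mathbb Z),\ t\in\mathbb Z^n,
\]
which is exactly the matrix description of the affine map $\gamma\colon x\mapsto Ux+t$ in $\mathcal G_n$. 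Dehomogenizing via $y\mapsto(y_1/y_{n+1},\ldots,y_n/y_{n+1})$ converts the identity $\Gamma(\tilde v_i)=\tilde v'_i$ into $\gamma(v_i)=v'_i$.

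Uniqueness is immediate: any element of $\mathcal G_n$ is in particular an affine self-map of $\mathbb R^n$, hence determined by its values on the $n+1$ affinely independent vertices of $T$, and the parameters $U,t$ of the map are read off unambiguously from $\gamma$. The only genuinely substantive step is the recognition of regularity as the $\mathbb Z$-basis condition; once that is on the table, the denominator hypothesis and the block-triangular form fall out almost mechanically.
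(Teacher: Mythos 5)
Your proposal is correct and follows essentially the same route as the paper's proof: pass to homogeneous correspondents, observe that regularity of the top-dimensional simplexes makes both vertex tuples $\mathbb Z$-bases of $\mathbb Z^{n+1}$, obtain the unique unimodular matrix matching them, and use the denominator hypothesis to force the bottom row $(0,\ldots,0,1)$ before dehomogenizing. The only difference is presentational (you phrase the last-row argument via the projection functional $\pi$, the paper via the matrix identity $MV=V'$), so nothing further is needed.
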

 
 \begin{proof}
The vectors  $\tilde v_1,\ldots,\tilde v_{n+1}$
constitute a basis of the free abelian group $\mathbb Z^{n+1}$,
and so do the vectors 
 $\tilde v'_1,\ldots,\tilde v'_{n+1}$. Let $V$ be the
 $(n+1)\times(n+1)$ matrix whose columns are
 $\tilde v_1,\ldots,\tilde v_{n+1}$. Let similarly $V'$
 be the matrix with columns  $\tilde v'_1,\ldots,\tilde v'_{n+1}$.
 There is a unique  (automatically integer and unimodular)
 $(n+1)\times(n+1)$   matrix $M$ such that   $MV=V'$.
 The assumption $\den(v_i)=\den(v'_i)$ for all $i$,   is to the
 effect that the bottom row of $M$ has the form
 $0, \ldots, 0,1$  with $n$ zeros. From $M$ one 
 immediately obtains the desired $\gamma\in \mathcal G_n$.  
\end{proof}

For each $d=1,2,\ldots,$  in the notation of   \cite[3.1]{harwri},
we let 
 $\mathfrak F_d$ be the $d$th {\it Farey sequence}, the ordered   list
of all rational numbers in $[0,1]$  of denominator $\leq d.$

\bigskip

We are now in a position
to  show that  $G_x$  is a complete invariant
 for orbits of  points  $x\in \R2$ with
$\rank(G_x)=1$:

\begin{proposition}
\label{proposition:rank1}
If   $x \in \R2$ and
$\rank(G_x)=1$ then   
$\orb(x)=\{x'\in\R2\mid G_{x'}=G_x\}$.
\end{proposition}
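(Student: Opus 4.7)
Since $\rank(G_x)=1$, $x=(p/d,q/d)$ is rational with $d=\den(x)$ and $\gcd(p,q,d)=1$; by the observation immediately before, $G_x=\frac{1}{d}\mathbb Z$, so $G_{x'}=G_x$ is equivalent to $\den(x')=d$. The inclusion $\orb(x)\subseteq\{x'\mid G_{x'}=G_x\}$ is immediate: writing $\gamma(y)=Uy+t$ with $U\in GL(2,\mathbb Z)$ and $t\in\mathbb Z^2$, the coordinates of $\gamma(y)$ lie in $G_y$, and applying the same reasoning to $\gamma^{-1}$ yields $G_{\gamma(y)}=G_y$.

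For the reverse inclusion I plan to invoke Lemma~\ref{lemma:maps}. Given rational $x,x'\in\R2$ with $\den(x)=\den(x')=d$, it suffices to exhibit regular $2$-simplexes $T=\conv(x,v_1,v_2)$ and $T'=\conv(x',v'_1,v'_2)$ in $\R2$ whose remaining vertices $v_i,v'_i$ lie in $\mathbb Z^2$: the three pairs of vertices then match in denominator ($d,1,1$), and Lemma~\ref{lemma:maps} produces the required $\gamma\in\mathcal G_2$ with $\gamma(x)=x'$.

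The whole argument therefore rests on the following lattice-theoretic claim: every rational $x\in\R2$ is a vertex of some regular $2$-simplex in $\R2$ whose other two vertices lie in $\mathbb Z^2$. By the definition of regularity via $T^{\uparrow}$, this amounts to finding $(a_1,b_1),(a_2,b_2)\in\mathbb Z^2$ such that the matrix with columns $\tilde x=(p,q,d)^{\top}$, $(a_1,b_1,1)^{\top}$, $(a_2,b_2,1)^{\top}$ has determinant $\pm1$. Setting $a_1=0$ and expanding along the bottom row, the determinant simplifies to $a_2(q-db_1)-p(b_2-b_1)$, so the problem reduces to choosing $b_1\in\mathbb Z$ with $\gcd(p,q-db_1)=1$ and then applying B\'ezout to pick $a_2$ and $b_2-b_1$. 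I expect this number-theoretic step to be the only real obstacle; it is dispatched by a brief Chinese Remainder argument: primes $\ell\mid\gcd(p,d)$ automatically satisfy $\ell\nmid q$ (otherwise $\ell\mid\gcd(p,q,d)=1$), while for each prime $\ell\mid p$ with $\ell\nmid d$ the residue $db_1\bmod\ell$ ranges over $\mathbb Z/\ell\mathbb Z$, so $b_1$ can be chosen to avoid $db_1\equiv q\pmod\ell$ for all such $\ell$ simultaneously.
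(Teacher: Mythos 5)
Your proof is correct in substance but follows a genuinely different route from the paper's. The paper first normalizes: it uses a $GL(2,\mathbb Z)$ map sending the primitive vector on the ray $\mathbb R_{\geq 0}x$ to $e_1$, so that $x$ lands at a point $r=(p/d,0)$ on $\conv(O,e_1)$, and then manufactures the integer vertex $w=(q,1-c)$ from a Farey neighbor $q/c$ of $r$ in $\mathfrak F_d$, regularity of $\conv(r,w,e_2)$ being exactly the unimodularity relation $qd-pc=\pm1$. You skip the normalization entirely and prove directly that an arbitrary rational point of denominator $d$ is a vertex of a regular triangle with two integer vertices, by writing regularity as a $3\times 3$ determinant condition on the homogeneous correspondents and solving it with B\'ezout after a Chinese Remainder choice of $b_1$. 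Both arguments funnel through Lemma~\ref{lemma:maps} with the denominator pattern $(d,1,1)$, so the endgame is identical; what your version buys is independence from the one-dimensional Farey machinery (and it visibly generalizes to $\Rn$ in the spirit of Remark~\ref{remark:generalize}), at the price of a slightly more opaque existence argument where the paper gets the integer vertices ``for free'' from Farey unimodularity.

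One concrete defect to repair: your normalization $a_1=0$ fails when $p=0$ and $d>1$ (e.g.\ $x=(0,2/5)$, $\tilde x=(0,2,5)$). There the determinant collapses to $a_2(q-db_1)$, forcing $q-db_1=\pm1$, i.e.\ $d\mid q\mp 1$, which need not hold; correspondingly your CRT step breaks down because every prime divides $p=0$. The claim itself is still true in that case, and the fix is one line: if $p=0$ and $d>1$ then $\gcd(q,d)=1$, so first swap the two coordinates by an element of $GL(2,\mathbb Z)$ to reduce to $p\neq 0$ (and if $p=q=0$ then $d=1$ and a translation suffices). With that caveat the argument is complete.
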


\begin{proof} Trivially,
$\orb(x)\subseteq \{x'\in\R2\mid G_{x'}=G_x\}$.
For the converse inclusion, let
  $d=\den(x).$ Then $G_x=\mathbb Z\frac{1}{d}.$  
If  $x'\in \R2$
satisfies $G_{x'}=G_x$ then  $x'$ is a rational point and $\den(x')=d.$
If $d=1$ a translation by an integer vector will do.
If $d>1$ it is  no loss of generality to assume that $x$ and $x'$
both lie  in the
unit square $[0,1]^2$.
On the  ray    $\mathbb R_{\geq 0}x$, let   
$v$ be  the nonzero integer 
vector nearest to the origin. 
Let $O,e_1,e_2$  respectively denote the origin in $\R2$,
the point $(1,0)$ and the point $(0,1)$. 
Then, as is well known,
some affine transformation   $\alpha\in GL(2,\mathbb Z)$
satisfies  $\alpha(v)=e_1.$  
As a consequence,  $\alpha$  sends $x$ to some rational point
$r=(p/d,0)$ lying on 
the unit interval  $\conv(O,e_1)$.
 Similarly, some
affine transformation
$\beta \in GL(2,\mathbb Z)$  sends $x'$ to some
point $r'=(p'/d,0)\in \conv(O,e_1)$.  Let 
 $q/c$ and $s/t$ be the two neighbors (=contiguous elements) of
 $r$  in  the Farey sequence
 $\mathfrak F_d$,  recalling that   $d>1$. 
  Both denominators  $c$ and $t$
 are strictly smaller than $d$. If $d\geq 3,$ then
 $c\not=t$  and we may assume  $c<t.$
 If $d=2$  we have  $c=t=1.$
Let the integer vector $w\in \R2$ be defined by
$w =(q,1-c).$  Then the triangle
$T=\conv(r, w, e_2)$  is  regular: this
is a consequence of the unimodularity property
of any two neighbors in a Farey sequence.  The denominators of
  $r, w, e_2$ are    $d,1,1,$
respectively.
Letting similarly the triangle  $T'=\conv(r', w', e_2)$
   be defined with reference to  $r',$ it follows that
 the denominators of the vertices  $r', w', e_2$
 of   $T'$  are again  
 $d,1,1$.
    By { Lemma \ref{lemma:maps}}
  there is a map 
 $\gamma\in\mathcal G_2$  satisfying 
 $\gamma(r)=r', \,\, \gamma(w)=w',  \,\,
 \gamma(e_2)=e_2.$
Letting $\delta\in \mathcal G_2$ be defined
by composition as  $\delta=\beta^{-1}\gamma\alpha,$  
we conclude that  $\delta(x)=x'$, as desired.
 \end{proof}
 
\begin{remark} 
\label{remark:generalize}
 The generalization of
Proposition
\ref{proposition:rank1} to any dimension $n\geq 2$ 
is straightforward:
for a proof of the nontrivial direction,
for every   $x\in \mathbb Q^n$
we can  assume  $\den(x)=d>1$  (otherwise the result is trivial),
  and write  $x=(p/d,0,0,\ldots,0)$ for a
suitable  integer $0< p< d$  with $\gcd(p,d)=1$. 
Let  $F$   be the
subspace  of $\Rn$ spanned by $x$ and $e_2$, where
$e_2=(0,1,0,\ldots,0)$.
As  in the { proof of Proposition 
\ref{proposition:rank1},}
 for some integer vector  $w \in F$ 
there  is a regular 2-simplex  $T=\conv(x,e_2,w)$
with  $\den(w)=1.$
A routine construction now represents $T$ as
a face of some  regular $n$-simplex,
$U=\conv(x,e_2, w, v_3,\ldots,v_{n})$  in $\Rn$, for 
 suitable integer vectors $v_3,\ldots,v_{n}$.
 Similarly, given $x'\in\mathbb Q^n,$ 
 the  hypothesis  $G_{x'}=G_x$ means that
$\den(x')=\den(x)=d.$  Without loss of
generality we can write  $x'=(p'/d,0,0,\ldots,0)$,
and show that  also $x'$    is the vertex
of some regular  $n$-simplex  $U'$  in $\Rn$  whose remaining
vertices   are integer vectors.
By { Lemma \ref{lemma:maps}}
some  $\gamma\in\mathcal G_n$  satisfies
$\gamma(x)=x'$.
 
\smallskip
 The case $n=1$ will be dealt with in { Corollary
 \ref{corollary:onedim}.}
\end{remark}

 \section{Rank 2: More on regularity}
 The classification of orbits of points $x\in\R2$ with $\rank(G_x)=2$
 is  more laborious: the
 following example shows that
 in general,  $G_x$ alone  is not a complete classifier of $\orb(x)$:

\begin{example}
\label{example:nonbasta}
Let
$a=(1/5,\xi)$ and $b=(2/5,\xi)$, for some
irrational  $\xi\in[0,1]$.  Then
$G_a=G_b$ but for no
$\gamma\in \mathcal G_2$ we can have
$\gamma(a)=b.$  
\end{example}

\begin{proof}
For otherwise (absurdum hypothesis),
we can write
$$
\left(
\begin{matrix}
x & y &z  \\[0.1cm]
u&v&w\\[0.1cm]
0&0&1
 \end{matrix}
     \right)\left(
\begin{matrix}
1/5 \\[0.1cm]
\xi \\[0.1cm]
1
 \end{matrix}
     \right) =
     \left(
\begin{matrix}
2/5 \\[0.1cm]
\xi \\[0.1cm]
1
 \end{matrix}
     \right)
$$
for some unimodular integer $3\times 3$
matrix  $U$.
Since  $\xi$  is irrational, from 
$
x/5+y\xi+z=2/5$
we get $y=0 \mbox{ and } x=2-5z.
$
Further, from  
$
u/5+v\xi+w=\xi $ we get $ v=1 \mbox{ and } w=-u/5.
$
In conclusion, the unimodularity of $U$ yields
$$
\det\left(
 \begin{matrix}
2-5z & 0 \\[0.1cm]
u &1
 \end{matrix}
     \right)
     =\pm 1, 
$$
 {whence}
$2-5z=\pm 1$, which is impossible.
\end{proof}

  { In Definition \ref{definition:cx}}
we will introduce a second  invariant
$c_x\in \mathbb Z$  and show that the pair  $(G_x,c_x)$
  provides a complete classification of $\mathcal G_2$-orbits.
 
 \medskip
For later purposes we record here the following 
easy lemma:

 \begin{lemma}
 \label{lemma:subbasis}
 For $n=1,2,\ldots,$  let $F$ be an $e$-dimensional
  rational affine space in $\Rn$ with
 $0\leq e <n$.  Let 
$\conv(v_0,\ldots,v_e)$  and $\conv(w_0,\ldots,w_e)$ be two regular
$e$-simplexes lying on $F$.
Suppose
$\conv(v_0,\ldots,v_e,v_{e+1},\ldots,v_{n})$  is a regular
$n$-simplex, for suitable  points $v_{e+1},\ldots,v_{n}\in \Rn$.
 Then also
$\conv(w_0,\ldots,w_e,v_{e+1},\ldots,v_{n})$ is a regular
$n$-simplex. 
\end{lemma}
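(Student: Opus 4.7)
The plan is to lift everything to the homogeneous picture in $\mathbb{R}^{n+1}$ and reduce the statement to a clean basis-change argument inside the lattice $\mathbb{Z}^{n+1}$. Since $F$ is a rational $e$-dimensional affine space containing the $v_i$ and $w_i$ ($i=0,\ldots,e$), let $F^{\uparrow}\subseteq\mathbb{R}^{n+1}$ denote the $(e+1)$-dimensional rational linear subspace spanned by the vectors $(x,1)$, $x\in F$. Set $L_F = F^{\uparrow}\cap\mathbb{Z}^{n+1}$, a saturated sublattice of $\mathbb{Z}^{n+1}$ of rank $e+1$. The homogeneous correspondents $\tilde v_0,\ldots,\tilde v_e$ and $\tilde w_0,\ldots,\tilde w_e$ all lie in $L_F$.

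The first key step is to show that $\tilde v_0,\ldots,\tilde v_e$ is a $\mathbb{Z}$-basis of $L_F$. By regularity of $\conv(v_0,\ldots,v_n)$, the vectors $\tilde v_0,\ldots,\tilde v_n$ form a $\mathbb{Z}$-basis of $\mathbb{Z}^{n+1}$. Since $v_{e+1},\ldots,v_n$ are affinely independent from $F$ (the $n$-simplex has full affine dimension $n$), the vectors $\tilde v_{e+1},\ldots,\tilde v_n$ lie outside $F^{\uparrow}$, and in fact any $z\in L_F$, written in the basis as $z=\sum_{i=0}^{n} a_i \tilde v_i$, must have $a_{e+1}=\cdots=a_n=0$ by linear independence together with $z\in F^{\uparrow}=\mathrm{span}_{\mathbb{R}}(\tilde v_0,\ldots,\tilde v_e)$.

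The second and more delicate step is to show that $\tilde w_0,\ldots,\tilde w_e$ is likewise a $\mathbb{Z}$-basis of $L_F$. Regularity of $\conv(w_0,\ldots,w_e)$ means that $\tilde w_0,\ldots,\tilde w_e$ extends to a $\mathbb{Z}$-basis of $\mathbb{Z}^{n+1}$, so the sublattice $M:=\langle\tilde w_0,\ldots,\tilde w_e\rangle_{\mathbb{Z}}$ is \emph{primitive} in $\mathbb{Z}^{n+1}$, i.e., $\mathbb{Z}^{n+1}/M$ is torsion-free. Now $M\subseteq L_F$ and both have rank $e+1$, so $L_F/M$ is finite; composing the inclusion $L_F\hookrightarrow\mathbb{Z}^{n+1}$ with the quotient map to $\mathbb{Z}^{n+1}/M$ gives a homomorphism whose image is torsion in a torsion-free group, hence zero. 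Therefore $L_F=M$, as desired. This saturation/primitivity argument is the main obstacle of the proof; the rest is bookkeeping.

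Once both sets are bases of $L_F$, the substitution is immediate: any $z\in\mathbb{Z}^{n+1}$ decomposes uniquely as $z=z'+\sum_{j>e} b_j \tilde v_j$ with $z'\in L_F$ and $b_j\in\mathbb{Z}$, and rewriting $z'$ in the basis $\tilde w_0,\ldots,\tilde w_e$ shows that $\tilde w_0,\ldots,\tilde w_e,\tilde v_{e+1},\ldots,\tilde v_n$ is a $\mathbb{Z}$-basis of $\mathbb{Z}^{n+1}$. In particular these vectors are linearly independent, so $\conv(w_0,\ldots,w_e,v_{e+1},\ldots,v_n)$ is a genuine $n$-simplex, and the regularity of its associated cone follows since its primitive generators \emph{are} a basis of $\mathbb{Z}^{n+1}$. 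This completes the argument.
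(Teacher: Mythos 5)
Your proof is correct and complete. The paper itself offers no argument for Lemma~\ref{lemma:subbasis} (it is recorded as an ``easy lemma'' with the proof left implicit), so there is nothing to compare against; your route --- identifying $L_F=F^\uparrow\cap\mathbb Z^{n+1}$, showing both $\{\tilde v_0,\dots,\tilde v_e\}$ and $\{\tilde w_0,\dots,\tilde w_e\}$ are bases of it via saturation of the sublattice generated by the $\tilde w_i$, and then substituting one basis of $L_F$ for the other inside the basis $\{\tilde v_0,\dots,\tilde v_n\}$ of $\mathbb Z^{n+1}$ --- is exactly the argument the author presumably had in mind, and it matches the paper's own use of the lattice $\mathbb Z^{n+1}\cap F^*$ in the proof of Lemma~\ref{lemma:wlog-isobunch}.
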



The following characterization of regularity will find repeated use
throughout the rest of this paper:

\begin{lemma}
\label{lemma:regular-simplex}
Given   integers   $n\geq 1$ and 
and   $\,\,0\leq m \leq n\,\,$,
let    $\,\,\,T = \conv(v_0, \ldots, v_m)$
be an $m$-simplex in $\Rn$. 
Then the following conditions are equivalent:

\begin{itemize}
    \item[(i)] $T$ is regular.

%
%

 \smallskip
   \item[(ii)]
The half-open parallelepiped    $P_{T}=\{x\in \mathbb R^{n+1}\mid x=
\sum_{i=0}^{m} \lambda_i \tilde{v}_{i},\,\,\,
0\leq \lambda_i<1\}$
    contains no nonzero integer point.


 \smallskip 
       \item[(iii)] For every   nonempty  face
    $F$ of $T$ and every rational 
    point $r$ lying in the relative interior of $F$,
    the denominator of $r$ is $\geq$ the
    sum of the denominators of the vertices of $F$.
       
    \end{itemize}
\end{lemma}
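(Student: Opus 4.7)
The plan is to establish the cycle $(\mathrm{i}) \Leftrightarrow (\mathrm{ii}) \Leftrightarrow (\mathrm{iii})$. For the first equivalence I would invoke the standard dictionary between bases of $\mathbb{Z}^{n+1}$ and empty half-open parallelepipeds. Set $\Lambda = \mathbb{Z}\tilde v_0 + \cdots + \mathbb{Z}\tilde v_m$; since $T$ is an $m$-simplex, the homogeneous correspondents $\tilde v_0, \ldots, \tilde v_m$ are linearly independent in $\mathbb{R}^{n+1}$, so $\Lambda$ is free of rank $m+1$. Condition (i) says that $\{\tilde v_0, \ldots, \tilde v_m\}$ extends to a $\mathbb{Z}$-basis of $\mathbb{Z}^{n+1}$, equivalently that $\mathbb{Z}^{n+1}/\Lambda$ is torsion-free (hence free), equivalently that $\Lambda$ is saturated in $\mathbb{Z}^{n+1}$. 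Now any $u \in \mathbb{Z}^{n+1}$ in the $\mathbb{R}$-span of the $\tilde v_i$ admits a unique expansion $u = \sum_i \lambda_i \tilde v_i$ with $\lambda_i \in \mathbb{Q}$, and $u - \sum_i \lfloor \lambda_i \rfloor \tilde v_i$ belongs to $P_T \cap \mathbb{Z}^{n+1}$ and vanishes iff $u \in \Lambda$. Hence $\Lambda$ is saturated iff $P_T \cap \mathbb{Z}^{n+1} = \{0\}$, which is (ii).

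For $(\mathrm{ii}) \Leftrightarrow (\mathrm{iii})$ I would argue both directions contrapositively, via the correspondence between nonzero integer points of $P_T$ and ``light'' rational points in faces of $T$. Starting from a nonzero integer $w = \sum_i \lambda_i \tilde v_i \in P_T$ with $\lambda_i \in [0,1)$, put $S = \{i : \lambda_i > 0\}$ and $N = \sum_{i \in S} \lambda_i \den(v_i)$: this is the $(n+1)$st coordinate of $w$, a positive integer, and it satisfies $N < \sum_{i \in S} \den(v_i)$ by $\lambda_i < 1$. The rational point $r = (w_1/N, \ldots, w_n/N)$ lies in the relative interior of the face $F = \conv(v_i : i \in S)$ with $\den(r) \leq N$, violating (iii). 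Conversely, given a face $F = \conv(v_{i_0}, \ldots, v_{i_k})$ and a rational $r$ in its relative interior with $\den(r) < \sum_j \den(v_{i_j})$, expanding $\tilde r = \sum_j \mu_j \tilde v_{i_j}$ gives $\mu_j = \alpha_j \den(r)/\den(v_{i_j}) > 0$ (with $\alpha_j$ the barycentric coordinates of $r$) and $\sum_j \mu_j \den(v_{i_j}) = \den(r)$; the strict inequality forces at least one $\mu_j$ to be non-integer, because otherwise each $\mu_j \geq 1$ would yield $\den(r) \geq \sum_j \den(v_{i_j})$. Then $w := \tilde r - \sum_j \lfloor \mu_j \rfloor \tilde v_{i_j}$ is a nonzero integer point of $P_T$, violating (ii).

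The main obstacle is the denominator bookkeeping in $(\mathrm{ii}) \Leftrightarrow (\mathrm{iii})$: keeping track of which face the constructed integer point lies over, and noting that the strict inequality $\den(r) < \sum_j \den(v_{i_j})$ is precisely what excludes the pathological case in which every $\mu_j$ is an integer. Once that single subtlety is pinned down, both directions reduce to the clean identification of $N$ (the $(n+1)$st coordinate of an integer point of $P_T$) with the denominator of the associated rational point of $T$, weighted against the denominators of the active vertices.
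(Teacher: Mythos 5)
Your proof is correct and follows essentially the same route as the paper's: the equivalence (i)$\Leftrightarrow$(ii) via the saturation/translation argument on the lattice generated by the $\tilde v_i$, and the identification of the height (last coordinate) of an integer point of $P_T$ with the denominator of the associated rational point in the relative interior of a face. The only cosmetic difference is that you close the cycle by proving $\neg$(iii)$\Rightarrow\neg$(ii) directly with the floor-subtraction trick, whereas the paper proves (i)$\Rightarrow$(iii) by using regularity of the face $F$ to write $\tilde r$ as an integer combination with coefficients $\geq 1$; the two computations are the same modulo contraposition.
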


\begin{proof}

(i)$\Rightarrow$(ii) Trivial.
(ii)$\Rightarrow$(i)  If $T$ is not regular  
some point $q$  of the abelian group
 $\mathbb Z^{n+1} \cap
              ( \mathbb R \,\tilde v_0+\dots+
       \mathbb R\,\tilde v_m) $ is not expressible
       as a linear combination of   $\tilde v_0,\ldots,\tilde v_m$  
       with integer coefficients.  From $q$ one easily obtains
       by translation a nonzero integer point $q'$ lying in $P_T$.

\smallskip
For the proof of (iii)$\Leftrightarrow$(i),  supposing
$W=\conv(z_1,\ldots,z_{k})$ is a nonempty face of $T$
we will write $g_i=\tilde z_i \,\,\,(i=1,\dots,k)$ for the
primitive generating vectors of the cone  $W^\uparrow=
\pos[g_1,\ldots,g_k]$,
and  $P_W$  for the half-open parallelepiped associated to $W$.
We further let  $\Sigma_W$ denote the sum of the denominators of the
vertices of $W$.
In more detail, letting
$g_{i,n+1}$ denote the
$(n+1)$th coordinate of   $g_i$ of
$W^{\uparrow}$,  
$\Sigma_W  =  g_{1,n+1}+g_{2,n+1}+\dots+g_{k,n+1}.$

 \medskip
(iii)$\Rightarrow$(i)
If $T$ is not regular, by
(ii)$\Rightarrow$(i)
    the half-open parallelepiped 
    $P_T$ contains a nonzero integer
 point  $u = (u_1,\ldots,u_{n+1}) \in \mathbb Z^{n+1}$. 
Without loss of generality, $u$ is
 primitive.  Let $a$ be the uniquely determined
 rational point  such that  $u =\tilde a.$
It follows that $a\in T$ and  $\den(a)=u_{n+1}=$
$(n+1)$th coordinate of $u$.
Let $S$ be the smallest
face of $T$ such that
$a$ belongs to $S$.
Then $a$ lies in the relative
interior of $S$,   and
$u$  
belongs to the half-open parallelepiped 
$P_S.$  Therefore,    
$\den(a)  < \Sigma_S$ and condition (iii) fails.   

\smallskip
(i)$\Rightarrow$(iii)
Suppose $T$ is  regular,
 $F= \conv(w_0,\ldots,w_l)$ is a nonempty face of $T$,
 and $r$  is a rational point in the relative interior of $F$.
 Then $F$ is regular,  and the
 homogeneous correspondent $\tilde r$ of $r$
 is a primitive integer vector lying
in the relative interior of the cone
    $F^{\uparrow}$.   For suitable integers   $n_i\geq 1$ we can write 
$\tilde r=n_0\tilde w_0+\dots+n_l\tilde w_l,$  whence the
$(n+1)$th coordinate
$\tilde{r}_{n+1}=\den(r)$  of 
$\tilde r$ satisfies the  inequality  
$\tilde{r}_{n+1}\geq \Sigma_F$.
\end{proof}

\section{Rank 2: the companion of a rational point in $[0,1]$}
\begin{definition}
\label{definition:companion}
Let $x = p/d\in[0,1]$,    where the integers
$d=1,2,\ldots$, and $p$  are relatively prime,
in symbols,  $\gcd(p,d)=1.$
  Then the
  {\it companion   $\comp(x)$  of}  $x$    is defined
  as follows:  $\comp(0)=1$,  $\comp(1)=0,$ $\comp(1/2)=1$;
 for $d=3,4,\ldots,$   $\comp(p/d)=q/c,$  where $q/c$
is the neighbor (= contiguous element) of  $x$ in $\mathfrak F_d$
having  smaller denominator than the other neighbor.
 
 \smallskip
  The well known unimodularity property  $qd-pc=\pm 1$
  of any two neighbors  $p/d$ and $q/c$  in a Farey sequence,    
shows that  $\comp(p/d)$  is unambiguously defined. 
\end{definition}

  \begin{lemma}
  \label{lemma:preconverse}
  Adopt the above notation:
  \begin{itemize}
\item[(i)]  For
$d=1,2,3,4$,  and $x=p/d\in [0,1]$,  $\den(\comp(x))=1.$

\item[(ii)] For every $x\in \mathbb Q\cap[0,1],$
 $\den(x)$ \,\,and\,\,  $\den(\comp(x))$ are relatively prime.

\item[(iii)]   
$
\mbox{For  all
$\,\,d\geq 3$, }\, \den(\comp(p/d))<d/2.
$
\end{itemize}
\end{lemma}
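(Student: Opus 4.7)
The plan is to handle the three parts separately, with (i) being a direct enumeration and (ii) and (iii) both resting on the unimodularity of Farey neighbors: whenever $q/c$ and $p/d$ are neighbors in some $\mathfrak F_d$, one has $qd-pc=\pm 1$.

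For part (i), I would simply list the reduced fractions $p/d\in[0,1]$ with $d\leq 4$ and compute their companions from the definition. The cases $d=1$ and $x=1/2$ are built into Definition \ref{definition:companion}. For $d=3$ and $d=4$, I would write out $\mathfrak F_3=\{0,1/3,1/2,2/3,1\}$ and $\mathfrak F_4=\{0,1/4,1/3,1/2,2/3,3/4,1\}$ and read off that for each $x$ of denominator $3$ or $4$, one of the two Farey neighbors is already an integer (namely $0$ or $1$), which necessarily wins the ``smaller denominator'' race; hence $\comp(x)\in\{0,1\}$, so $\den(\comp(x))=1$.

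For part (ii), the cases $x\in\{0,1,1/2\}$ are trivial since $\comp(x)$ has denominator $1$. For $d\geq 3$, write $\comp(p/d)=q/c$; the unimodularity relation $qd-pc=\pm 1$ implies that any common divisor of $c$ and $d$ must divide $\pm 1$, so $\gcd(c,d)=1$.

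Part (iii) is the substantive point. Let $d\geq 3$ and let $q/c<p/d<s/t$ be the two Farey neighbors of $p/d$ in $\mathfrak F_d$, with $\comp(p/d)=q/c$ the one of smaller denominator. The unimodularity relations $pc-qd=1$ and $sd-pt=1$, added together after multiplying appropriately, yield $p(c+t)=d(q+s)$; since $\gcd(p,d)=1$, we get $d\mid c+t$. Next I would argue $c,t<d$: if instead $c=d$, then the relation $pc-qd=\pm 1$ would force $|p-q|d=\pm 1$, impossible for $d\geq 2$. Hence $1\leq c+t<2d$, forcing $c+t=d$. Finally, $c\neq t$: by part (ii) both $c$ and $t$ are coprime to $d$, so $c=t$ would give $d=2c$ with $\gcd(c,d)=1$, whence $c=1$ and $d=2$, contradicting $d\geq 3$. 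Therefore $c<t$, and $2c<c+t=d$ gives $c<d/2$.

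The main obstacle, such as it is, is the identity $c+t=d$; everything else is either enumeration or a one-line consequence of unimodularity. This identity is the classical ``denominators of Farey neighbors sum to $d$'' fact and follows cleanly from the two unimodularity relations once one remembers to use $\gcd(p,d)=1$ to divide out.
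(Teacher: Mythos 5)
Your proof is correct, and it reaches the same two-step skeleton as the paper's argument for part (iii): first establish that the denominators of the two Farey neighbors of $p/d$ sum to $d$ (the mediant identity), then observe that the companion is the neighbor of strictly smaller denominator, so $2c<c+t=d$. The difference lies in how you obtain the mediant identity. The paper derives $d=b+c$ by invoking its polyhedral machinery --- Lemma \ref{lemma:regular-simplex} applied to the regular segments $\conv(a/b,p/d)$, $\conv(q/c,p/d)$ and $\conv(a/b,q/c)$ --- which keeps the lemma inside the regularity framework that the rest of the paper reuses (e.g.\ in Lemma \ref{lemma:companion}). You instead extract $p(c+t)=d(q+s)$ from the two unimodularity relations and divide by $\gcd(p,d)=1$, then pin down $c+t=d$ from $d\mid c+t$ and $c,t<d$; this is more elementary and entirely self-contained, at the cost of not exercising the geometric characterization. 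Your explicit treatment of (i) by enumeration and of (ii) via $qd-pc=\pm1$ is also fine; the paper simply omits these as routine (``We only prove (iii)''). One small point worth making explicit in your write-up of (iii): you should note that the exclusion $c\neq t$ is exactly what guarantees the companion is well defined as ``the neighbor of smaller denominator'' for $d\geq 3$, which is the same fact the paper leans on when it writes ``$c<b$ by definition of companion.''
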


\begin{proof}  We only prove (iii). Let $q/c$ denote the
companion of $p/d.$
Let  $a/b$   be the other neighbor   of  $p/d$ in $\mathfrak F_d.$
Both segments $\conv(a/b,p/d)$  and
$\conv(q/c,p/d)$ are regular.  
The denominators  of both
neighbors are  smaller than $d$.
{ By Lemma \ref{lemma:regular-simplex},}
$a/b$  and $q/c$  are neighbors in
$\mathfrak F_{\max(b,c)}$,  the segment
$\conv(a/b,q/c)$ is regular, and  $d=b+c$. 
(The rational $p/d$  is known as the {\it Farey mediant}  of 
$a/b$ and $q/c$.)
Since  $d\geq 3$, then $c<b$ by definition of companion,     
and hence  $c<d/2.$
\end{proof}

Conversely,   we have:

\begin{lemma}
\label{lemma:companion}
Let  $d$ and $c$ be integers  $\geq 1$.
Suppose
\begin{itemize}
\item[(i)] either  $d=1,2,3,4$ and $c=1$,

\item[(ii)] or  $d\geq 5$ and $c<d/2$,  
with $\gcd(d,c)=1$.
\end{itemize}

 Then the  Farey sequence
 $\mathfrak F_d$  contains a pair of  rationals
 $p/d$ and $q/c$  such that
 $q/c=\comp(p/d).$ 
\end{lemma}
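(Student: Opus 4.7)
The plan is to handle the finite list of small cases (i) by inspection and to treat the main case (ii) via the classical Farey mediant construction. The guiding observation is that for any $p/d\in\mathfrak F_d$ with $d\geq 3$, if its two Farey neighbors in $\mathfrak F_d$ are $a/b$ and $q/c$, then the mediant identity $p/d=(a+q)/(b+c)$ forces $d=b+c$. Reversing this, to realize a prescribed companion denominator $c$ it suffices to produce a pair of Farey neighbors $a/b,\,q/c$ with $b=d-c$ (automatically $b>c$, since $c<d/2$), form their mediant, and read off $p$. For case (i), direct inspection of $\mathfrak F_d$ for $d\in\{1,2,3,4\}$ and $c=1$ yields explicit witnesses: e.g.\ $p/d=0/1,\,1/2,\,1/3,\,1/4$ for $d=1,2,3,4$ respectively, each with companion of denominator $1$.

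For case (ii), assume $d\geq 5$, $\gcd(c,d)=1$, $c<d/2$, and set $b:=d-c$. Then $b>c\geq 1$, and $\gcd(b,c)=\gcd(d-c,c)=\gcd(d,c)=1$. If $c=1$, take $q:=1$ and $a:=b-1$; if $c\geq 2$, let $q\in\{1,\ldots,c-1\}$ be the unique solution of $bq\equiv 1\pmod{c}$ and set $a:=(bq-1)/c$. In either case $bq-ac=1$, $0\leq a<b$, and $0<q\leq c$, so $a/b$ and $q/c$ are reduced fractions in $[0,1]$ satisfying the Farey unimodularity condition, and hence are consecutive in $\mathfrak F_{\max(b,c)}=\mathfrak F_b$. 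Setting $p:=a+q$, the identity $pc-dq=(a+q)c-(b+c)q=ac-bq=-1$ gives $\gcd(p,d)=1$, so $p/d\in\mathfrak F_d$ has denominator exactly $d$. By the standard Stern--Brocot mediant-insertion property, the two Farey neighbors of $p/d$ in $\mathfrak F_d$ are precisely $a/b$ and $q/c$; since $c<b$, Definition \ref{definition:companion} then yields $\comp(p/d)=q/c$, as desired.

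The only non-routine step is the B\'ezout selection of $(a,q)$, specifically verifying that the modular-inverse representative lies in the allowed range and handling the edge cases $c=1$ and $q\in\{0,c\}$ explicitly; once this is settled, the rest is pure Farey-sequence bookkeeping of the sort already used in Lemma \ref{lemma:preconverse}.
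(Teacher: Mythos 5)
Your proof is correct, but it reaches the conclusion by a genuinely different route from the paper's. The paper runs a Stern--Brocot cone descent on the point $(c,d)$: it builds the nested regular cones $C_0\supseteq C_1\supseteq\cdots$ until $(c,d)$ becomes the Farey mediant of the two generating vectors $u_t,v_t$, and then obtains $p$ and $q$ by \emph{transposing} coordinates ($q=u_{t1}$, $p=u_{t2}$), so that $qd-pc=\det(u_t\,|\,v_t)=\pm1$ falls out of the unimodularity of the cone basis; the adjacency and mediant facts are then re-derived internally from Lemma \ref{lemma:regular-simplex}. You instead prescribe the two neighbor denominators directly as $c$ and $b=d-c$ (using $c<d/2$ to guarantee $b>c$ and $\gcd(b,c)=\gcd(d,c)=1$), solve $bq\equiv 1\pmod c$ for $q$, set $a=(bq-1)/c$, and take the mediant $p=a+q$; the identity $pc-dq=ac-bq=-1$ then gives everything at once, including $\gcd(p,d)=1$. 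Your verification of the range conditions ($0\le a<b$, $1\le q\le c$, hence $p<d$) and of the reducedness of $a/b$, $q/c$, $p/d$ is sound, the small cases in (i) check out against Definition \ref{definition:companion}, and the final step $c<b\Rightarrow\comp(p/d)=q/c$ is exactly as in the paper. The trade-off: your argument is shorter and purely arithmetic, but it imports the two classical Farey facts (unimodularity characterizes adjacency in $\mathfrak F_{\max(b,c)}$; the mediant's neighbors in $\mathfrak F_{b+c}$ are its two parents) as black boxes from \cite{harwri}, whereas the paper deliberately derives these from its own regularity machinery (Lemma \ref{lemma:regular-simplex}), which it needs elsewhere anyway and which keeps the exposition self-contained in the polyhedral language used throughout.
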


\begin{proof}  In case  (i) the conclusion is trivial.
In case (ii),  let $C_0=\pos[(0,1), (1,1)]$ be the positive hull
in $\R2$  of the two integer vectors $(0,1)=u_0$ and $(1,1)=v_0$.
There is a unique sequence  of cones 
$$
C_0=\pos[u_0,v_0]\supseteq C_1=\pos[u_1,v_1]
\supseteq C_2=\pos[u_2,v_2]\supseteq \cdots \supseteq C_t=\pos[u_t,v_t]
$$
 all containing the point 
$(c,d)$  in their interior, and with the additional property that
$v_{i+1}=u_i+v_i$  and  $u_{i+1}\in\{u_i,v_i\}.$   
Each set  $\{u_i, v_i\}$ is a basis of  the free abelian group
$\mathbb Z^2$.  
Let $(u_{t1},u_{t2})$ be the coordinates of the
integer vector  $u_t$. Let  similarly $v_t=(v_{t1},v_{t2})$.
Let   $I_i$ be the  
intersection of $C_i$  with the
line   $y=1$,   and denote by $J_i$ 
the projection of  $I_i$ onto the $x$-axis.
We then obtain a decreasing sequence of regular 1-simplexes
$$
J_0=\conv(0,1)\supseteq J_1=\conv\left(\frac{u_{11}}{u_{12}},
\frac{v_{11}}{v_{12}}\right)\supseteq J_2=\conv\left(\frac{u_{21}}{u_{22}},
\frac{v_{21}}{v_{22}}\right)
\supseteq \cdots
$$
By { Lemma \ref{lemma:regular-simplex} (i)$\Rightarrow$(iii)},
 the rational point $c/d$  cannot lie in the interior
 of infinitely many $J_i$'s.
 Thus  this  sequence terminates after a certain number
 $t$ of steps  with the segment  $J_t=\conv({u_{t1}}/{u_{t2}},
{v_{t1}}/{v_{t2}})$,   in such a way that
  $c/d$  is the
Farey mediant of the vertices of  $J_t$. 
We then have 
\begin{equation}
\label{equation:uno}
 (c,d)=  (u_{t1},u_{t2})+(v_{t1},v_{t2}).
\end{equation}
Observe that  $u_{t1}<u_{t2}$ and  $v_{t1}<v_{t2}$.
The $2\times 2$ matrix whose columns are
the vectors  $u_t$  and $v_t$ is unimodular.
Upon defining 
$$
 q=u_{t1} 
\,\,\mbox{ and }\,\,  p=u_{t2}\,,$$
it follows that
  $qd-pc=\pm1,$  meaning that the 1-simplex
$J=\conv(p/d,q/c)$   is regular.
As a consequence, 
$\gcd(q,c)=\gcd(p,d)=1.$
By (\ref{equation:uno}),
$q<c$ and $p<d$, whence both $p/d$ and $q/c$
belong to $\mathfrak F_d$.

To see  that   
$q/c$ is a neighbor of $p/d$ in $\mathfrak F_d$,
arguing by way of contradiction
suppose some rational
$r  \in \mathfrak F_d$ lies in the interior of $J$.
  { Lemma \ref{lemma:regular-simplex}
(i)$\Rightarrow$(iii)}  then
yields  $\den(r) \geq d+c$, against the definition of
$\mathfrak F_d$. 

 To verify that  $q/c$  is the companion of $p/d,$  
let $a/b$ be the other
neighbor  of
$p/d$ in $\mathfrak F_d$.  
Since the segment  
with vertices $a/b$ and $p/d$ is regular
and by hypothesis $d>1,$ then $b<d$.
Since $c<d$  then
  $\max(c,b)< d$.
One more application of  {  Lemma  \ref{lemma:regular-simplex}}
now shows that 
$q/c$  and $a/b$ are contiguous elements of
 $\mathfrak F_{\max(c,b)}$, the
  segment   $\conv(q/c,a/b)$   is
regular, and   
$d=c+b$, i.e.,  $p/d$ is the Farey mediant of
its neighbors. 
Since by hypothesis  $c<d/2$ then $c<b.$
Having thus shown   
  that $q/c=\comp(p/d)$,
  the proof is complete.
\end{proof}

\section{Rank 2: The complete classifier
  $(G_x, c_x)$}

As shown by 
{ Example \ref{example:nonbasta}},
when  $\rank(G_x)=2$  the group $G_x$  does not suffice
in general to classify $\orb(x)$.
In { Definition \ref{definition:cx}}
we  will define a numerical invariant 
which,  jointly with the group $G_x$
completely classifies $\orb(x).$ 
As a preliminary  step  we introduce
the following notation and terminology: 

 \begin{definition}
\label{definition:d-and-F-of-x}
For any fixed   $n=1, 2,3,\ldots$ and  
 rational affine space  $\emptyset \not=F\subseteq \mathbb R^n$,
we let $
d_F =  \min\{q \in \mathbb Z
 \mid q \mbox{\,\,\,is the denominator of a rational point of} \,\,\,F\}.
 $
For any  $x\in \Rn$ we define  
$
F_x=\bigcap \{A\subseteq \Rn \mid A \mbox{ is a rational 
hyperplane in  $\Rn$ and $x\in A$}\}.
$
\end{definition}

\begin{lemma}
\label{lemma:wlog-isobunch}  
For  $n=1, 2,3,\ldots$ and $e=0,\ldots,n,$
let  $F$ be  an $e$-dimensional rational affine space in 
 $\Rn$.
Let  $d=d_F$.
\begin{itemize}
\item[(i)] There are rational points
 $v_0,\ldots,v_e\in F$, all with denominator $d$,
 such that $\conv(v_0,\ldots,v_e)$ is a regular
 $e$-simplex. 
 
\item[(ii)] For any rational point $y\in F$
 there is $h=1,2,\ldots$  such that $\den(y)$$=hd.$
 \end{itemize}
\end{lemma}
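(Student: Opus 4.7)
The plan is to homogenize $F$ and translate both statements into lattice-theoretic facts about a pure sublattice of $\mathbb{Z}^{n+1}$. Since $F$ is a nonempty rational affine space, it contains a rational point (a rational linear system with a real solution has a rational one); pick $v_{0}\in F\cap\mathbb{Q}^{n}$. Write $F=v_{0}+V$, where $V$ is a rational linear $e$-subspace of $\mathbb{R}^{n}$, and set $L := \mathbb{R}(v_{0},1)+V\times\{0\}\subseteq\mathbb{R}^{n+1}$, a rational linear subspace of dimension $e+1$. The map $y\mapsto\tilde y$ sends rational points of $F$ bijectively to primitive integer vectors of $\Lambda := L\cap\mathbb{Z}^{n+1}$ having positive last coordinate, and $\den(y)$ equals the last coordinate of $\tilde y$. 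Letting $\pi\colon\mathbb{R}^{n+1}\to\mathbb{R}$ be projection to the last coordinate, a routine check shows $\pi(\Lambda)=d\mathbb{Z}$, where $d=d_{F}$: every $w\in\Lambda$ with $\pi(w)>0$ is $\pi(w)(y,1)$ for some rational $y\in F$ with $\den(y)\mid\pi(w)$ and $\den(y)\geq d_{F}$, while $d_{F}$ itself lies in $\pi(\Lambda)$ by definition. In particular, $d$ is realized as the last coordinate of some primitive $b_{0}=\tilde v_{0}\in\Lambda$ with $\den(v_{0})=d$.

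To prove (i), extend $\{b_{0}\}$ to a $\mathbb{Z}$-basis of $\Lambda$, subtract the appropriate integer multiples of $b_{0}$ from the other basis elements to kill their last coordinates, and then add $b_{0}$ back to each of them. This unimodular change of basis produces a $\mathbb{Z}$-basis $\{\tilde v_{0},\tilde v_{1},\ldots,\tilde v_{e}\}$ of $\Lambda$, every member of which has last coordinate equal to $d$. The associated rational points $v_{i}\in F$ satisfy $\den(v_{i})\mid d$ (because $d(v_{i},1)$ is integer) and $\den(v_{i})\geq d_{F}=d$ (by minimality), so $\den(v_{i})=d$ exactly, which also shows each $\tilde v_{i}$ is primitive. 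Because $L$ is a rational subspace, $\mathbb{Z}^{n+1}/\Lambda$ is torsion-free, hence $\Lambda$ is a direct summand of $\mathbb{Z}^{n+1}$; therefore the basis $\{\tilde v_{0},\ldots,\tilde v_{e}\}$ of $\Lambda$ extends to a $\mathbb{Z}$-basis of $\mathbb{Z}^{n+1}$. By the very definition of a regular cone given just before Lemma \ref{lemma:maps}, the cone $\pos[\tilde v_{0},\ldots,\tilde v_{e}]=\conv(v_{0},\ldots,v_{e})^{\uparrow}$ is regular, whence $\conv(v_{0},\ldots,v_{e})$ is a regular $e$-simplex.

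Part (ii) then drops out for free: given any rational $y\in F$, expand $\tilde y\in\Lambda$ in the basis just constructed as $\tilde y=\sum_{i=0}^{e}k_{i}\tilde v_{i}$ with $k_{i}\in\mathbb{Z}$, and apply $\pi$ to obtain $\den(y)=d\sum_{i=0}^{e}k_{i}$; positivity of $\den(y)$ forces $h:=\sum_{i=0}^{e}k_{i}\geq 1$ and yields $\den(y)=hd$. The main obstacle in the argument is the basis manipulation inside (i): producing a $\mathbb{Z}$-basis of $\Lambda$ all of whose members have last coordinate equal to $d$ (not merely lying in $d\mathbb{Z}$), and then confirming that the corresponding rational points have denominator exactly $d$. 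Once this is in place, the pure-sublattice property of $\Lambda\subseteq\mathbb{Z}^{n+1}$ delivers regularity automatically, and (ii) is a one-line consequence of the basis expansion.
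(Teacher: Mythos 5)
Your proof is correct, and it reaches the conclusion by a genuinely different route from the paper's. The paper first invokes an external construction (Proposition 1 of the cited Haar-theorem paper) to produce some regular $e$-simplex inside $F$, takes the resulting basis $B_0$ of $\mathbb{Z}^{n+1}\cap F^*$, and then runs an iterative reduction --- repeatedly replacing a basis vector of largest height by its difference with one of smallest height --- whose termination is guaranteed by the strict decrease of the sum of heights; part (ii) is then deduced from (i). You instead establish up front that the last-coordinate projection of $\Lambda=L\cap\mathbb{Z}^{n+1}$ equals $d\mathbb{Z}$ (which is essentially statement (ii) in disguise), extend the primitive vector $\tilde v_0$ of height $d$ to a basis of $\Lambda$, and normalize all last coordinates to $d$ in a single unimodular step; regularity then follows from purity of $\Lambda$ in $\mathbb{Z}^{n+1}$, a fact the paper also uses but only implicitly. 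Your version is self-contained and needs no termination argument, at the cost of front-loading the lattice-theoretic bookkeeping; the paper's version localizes the construction inside a prescribed simplex $R\subseteq F$, though it does not exploit that here. One small point worth a sentence in your write-up: the affine independence of $v_0,\ldots,v_e$ (so that $\conv(v_0,\ldots,v_e)$ really is an $e$-simplex) follows because the $\tilde v_i$ are linearly independent and all share the same last coordinate $d$.
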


\begin{proof}
(i)  Starting from any rational
$e$-simplex  $R\subseteq F$  and arguing as in
\cite[Proposition 1]{mun-dcds}, one easily obtains
a regular $e$-simplex   
$\conv(u_0,\ldots,u_e)\subseteq R.$

 For each $i=0,\ldots,e$
the integer vector $\tilde u_i\in \mathbb Z^{n+1}$ 
will be  more conveniently
denoted  $w_{0,i}$.
Let $F^*$  be the linear span in
$\mathbb R^{n+1}$  of the primitive integer vectors
$w_{0,0},\ldots,w_{0,e},$
$
F^*=\mathbb Rw_{0,0}+\dots+\mathbb Rw_{0,e}.
$
The regularity of 
$\conv(u_0,\ldots,u_e)$  means that the
 set  $B_0= \{w_{0,0},\ldots,w_{0,e}\}$ is a basis
of the free abelian group $\mathbb Z^{n+1}\cap F^*$. 
If the {\it heights}  (i.e., the last coordinates) of 
$w_{0,0},\ldots,w_{0,e}$
are all equal to $d$ we are done.
Suppose not all heights of the vectors in $B_0$ are equal to
 $d$.  We  will construct a finite sequence $B_0, B_1,\ldots$
 of bases of 
   $\mathbb Z^{n+1}\cap F^*$, 
 and  finally  obtain a  basis  $B_{t^*}$  yielding the desired
regular $e$-simplex $\conv(v_0,\ldots,v_e)$.
 
 The step leading from $B_0$  to $B_1$ 
 begins with the observation that   all
 heights   of $w_{0,0},\ldots,w_{0,e}$  are
$\geq d$,   and yet
it is impossible that they are all
equal to the same integer
$l>d$:  for otherwise,
a (necessarily primitive) vector  in $\mathbb Z^{n+1}\cap F^*$
  of height $d$,
as given by the definition of $d$, 
 could not arise
as a linear combination of 
$w_{0,0},\ldots,w_{0,e}$ with
integer coefficients, and $B_0$  would not be a basis
of  $\mathbb Z^{n+1}\cap F^*$. 
Since not all vectors
of $B_0$  have the same height,
 we  pick in $B_0$
a vector $w_{0,i}$  of largest height, a vector 
$w_{0,j}$  of smallest height, and replace
$w_{0,i}$  by the new integer vector
$w_{0,i}-w_{0,j}$ leaving the rest unchanged.
  We get a new basis
$B_1=\{w_{1,0},\ldots,w_{1,e}\}$  of  
$\mathbb Z^{n+1}\cap F^*$.
The height of each  element of $B_1$ is still $\geq d.$
The  sum of the heights of the elements  of $B_{1}$ is strictly smaller
than the sum of the heights of the elements  of $B_{0}$.  
The positive hull of these vectors yields an
$(e+1)$-dimensional regular cone $C_1$,
from which we get a regular $e$-simplex
$\conv(w_{1,0}^\downarrow,\ldots,w_{1,e}^\downarrow)
\subseteq \Rn$
by letting  $w_{1,i}^\downarrow$ be the only
rational point in $\Rn$ such that $w_{1,i}$
is the homogeneous correspondent of $w_{1,i}^\downarrow$.

Proceeding inductively, suppose we have a basis
$B_t=\{w_{t,0},\ldots,w_{t,e}\}$ of $\mathbb Z^{n+1}\cap F^*$, 
whose positive hull is the cone $C_t\subseteq \mathbb R^{n+1}$, 
and its associated
regular $e$-simplex is   
$\conv(w_{t,0}^\downarrow,\ldots,w_{t,e}^\downarrow)\subseteq \Rn$.
If the heights of all the vectors of $B_t$ are equal to $d$ we are done.
Otherwise, as above, 
we are allowed to  pick in $B_t$ a top vector  $w_{t,i}$,
a bottom vector $w_{t,j}$, and replace $w_{t,i}$
by $w_{t,i}-w_{t,j}$   thus obtaining
a new basis $B_{t+1}=\{w_{t+1,0},\ldots,w_{t+1,e}\}$.
{}From $B_{t+1}$ we get a new
 regular cone  $C_{t+1}\subseteq \mathbb R^{n+1}$
and a new regular 
$e$-simplex
$\conv(w_{t+1,0}^\downarrow,\ldots,w_{t+1,e}^\downarrow)
\subseteq \Rn$
by the same procedure of the first step.
The height of each  element of $B_{t+1}$ is still $\geq d.$
Since the 
sum of the heights of the elements of    
$B_{t+1}$ is strictly smaller
than the sum of the heights of the elements  of $B_{t}$,
this sequence terminates with a  basis  $B_{t^*}$  of
$\mathbb Z^{n+1}\cap F^*$   whose elements
all have the  same height  $d$.  From
$B_{t^*}$ we obtain the regular cone $C_{t^*}$ 
and the desired regular $e$-simplex $\conv(v_0,\ldots,v_e)$.

  (ii) is a trivial consequence of (i): for, 
 the primitive vector
$\tilde y$ is a linear combination of the
$\tilde v_i$ with integer coefficients, and
$1\leq \den(y)\in \mathbb Z$.
\end{proof}

 The following characterization of $d_{F_x}$
 will find repeated use in the sequel:
 
\begin{lemma}
\label{lemma:missing}
With reference to {  Definition
\ref{definition:d-and-F-of-x},} for 
every  $x\in \R2$ with $\rank(G_x)=2$
let us write for short   $d$ instead of $d_{F_x}$.
Let us   display  $F_x$ as the
  rational line  $\{(x_1,x_2)\in\R2\mid a_1x_1+a_2x_2+a_3=0\}$,
for relatively prime integers  $a_1,a_2,a_3$.
Noting that the primitive integer  vector $(a_1,a_2,a_3)\in \mathbb Z^3$
is uniquely determined up to scalar multiplication by $\pm 1$ 
 we then have:  
\begin{itemize}

\item[(i)]    $d=\gcd(a_1,a_2).$

\smallskip
\item[(ii)] $d$ is the  denominator of the smallest nonzero 
positive rational in $G_x.$
\end{itemize}

\end{lemma}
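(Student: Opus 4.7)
The plan is to begin by identifying $F_x$ explicitly as the rational line $L$ displayed in the statement. The rank-2 hypothesis says that the $\mathbb Q$-linear relations among $1, x_1, x_2$ form a $1$-dimensional $\mathbb Q$-subspace of $\mathbb Q^3$, generated up to sign by a primitive integer triple $(a_1,a_2,a_3)$. One cannot have $(a_1,a_2)=(0,0)$, since otherwise the relation would collapse to $\pm 1 = 0$; hence $g := \gcd(a_1,a_2) \geq 1$ is well defined, and primitivity of $(a_1,a_2,a_3)$ forces $\gcd(g,a_3)=1$. That $F_x$ coincides with $L$ follows from $x \in L$ together with the observation that $x$ cannot lie on two distinct rational lines of $\R2$: their unique intersection would be a rational point, forcing $\rank(G_x)=1$.

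For part (i), I would write a rational point of $F_x$ in its lowest common form $y=(m/d',n/d')$ with $\gcd(m,n,d')=1$. The equation of $L$ becomes $a_1 m + a_2 n + a_3 d' = 0$, from which $g$ divides $a_3 d'$, and $\gcd(g,a_3)=1$ then yields $g \mid d'$; this gives the lower bound $d \geq g$. For the matching upper bound, factor $a_i = g b_i$ with $\gcd(b_1,b_2)=1$, solve $b_1 m_0 + b_2 n_0 = -a_3$ in integers by B\'ezout, and note that any prime $p \mid g$ dividing both $m_0$ and $n_0$ would also divide $b_1 m_0 + b_2 n_0 = -a_3$, contradicting $\gcd(g,a_3)=1$. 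Thus $\gcd(m_0,n_0,g)=1$, so $(m_0/g, n_0/g)$ is a rational point of $L$ of denominator exactly $g$, giving $d=g$.

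For part (ii), I would describe the rationals in $G_x$ using the same relation. An element $q = k_0 + k_1 x_1 + k_2 x_2 \in G_x$ with integer $k_i$ is rational exactly when $k_1 x_1 + k_2 x_2 \in \mathbb Q$, which forces $(k_1,k_2)$ to be $\mathbb Q$-proportional to $(a_1,a_2)$; since $\gcd(b_1,b_2)=1$, this proportionality actually forces $(k_1,k_2) = t(b_1,b_2)$ for some integer $t$. Then $k_1 x_1 + k_2 x_2 = t(b_1 x_1 + b_2 x_2) = -t a_3/g$, so $q = k_0 - t a_3/g$. As $(k_0,t)$ ranges over $\mathbb Z^2$ and $\gcd(a_3,g)=1$, these values exhaust $(1/g)\mathbb Z$. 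Hence the smallest positive rational in $G_x$ is $1/g$, of denominator $g=d$.

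The main obstacle is the existence portion of (i): not merely bounding $d$ from below, but producing a rational point of $F_x$ attaining the lower bound $g$. The crucial observation is that every B\'ezout solution of $b_1 m_0 + b_2 n_0 = -a_3$ automatically has $\gcd(m_0,n_0)$ coprime to $g$, ruling out cancellations that would shrink the denominator below $g$. Once (i) is in hand, (ii) essentially tracks the same $\mathbb Q$-linear combinations forward to their rational values.
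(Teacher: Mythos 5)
Your proof is correct. Part (i) is essentially the paper's own argument: the same B\'ezout construction of a point of denominator $\gcd(a_1,a_2)$ on $F_x$ (the paper uses the specific solution $(\alpha a_3/D,\beta a_3/D)$ with $\alpha a_1'+\beta a_2'=-1$, you use a general B\'ezout solution and observe that coprimality to $g$ is automatic), and the same divisibility argument $g\mid a_3d'$, $\gcd(g,a_3)=1\Rightarrow g\mid d'$ for the lower bound. Part (ii) is where you genuinely diverge, and your route is markedly more elementary. The paper proves (ii) geometrically: it builds a regular triangle $T=\conv(a,b,s)$ with $a,b\in F_x$ of denominator $d$, applies Lemma \ref{lemma:maps} to normalize $x$ to a point $\xi$ on the segment from $(1/d,0)$ to $(0,p/d)$, reads off $p/d\in G_x$ and hence $1/d\in G_x$, and then excludes smaller positive rationals via the free generating set $\{1/d,\xi_1\}$ of $G_x$. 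You instead compute the set of rational elements of $G_x$ outright: the rank-$2$ hypothesis makes the relation lattice of $(x_1,x_2,1)$ rank one, so $k_1x_1+k_2x_2\in\mathbb Q$ forces $(k_1,k_2)=t(b_1,b_2)$ with $t\in\mathbb Z$ (using $\gcd(b_1,b_2)=1$), whence $G_x\cap\mathbb Q=\frac{1}{g}(g\mathbb Z+a_3\mathbb Z)=\frac{1}{g}\mathbb Z$. This is a cleaner and self-contained derivation that in fact yields the stronger statement that the rationals in $G_x$ are exactly $\frac1d\mathbb Z$, and it avoids any dependence on the regularity machinery of Lemmas \ref{lemma:regular-simplex} and \ref{lemma:wlog-isobunch}; what the paper's heavier normalization buys is reuse, since essentially the same triangle construction reappears in the proof of Theorem \ref{theorem:rank2}.
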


\begin{proof}
(i)  Let  us use the abbreviations  $D=\gcd(a_1,a_2)$, 
$  \,\,a'_1=a_1/D,
\,\,a'_2=a_2/D$. 
Then
$ \gcd(D,a_3)= \gcd( a'_1, a'_2)= 1$ and for suitable  
  integers $\alpha, \beta$
we can write
$$\alpha a'_1+\beta a'_2 =-1,  \,\,\, \gcd(\alpha,\beta)=1.$$
Thus 
$ 
a_1(\alpha a_3/D)+a_2(\beta a_3/D)=-a_3$, and 
the rational point  
 $({\alpha a_3}/{D},{\beta a_3}/{D})$ belongs to  $F_x.$ 
 {}From $\gcd(D,a_3)= \gcd(\alpha,\beta)=1$ it follows
 that $\den({\alpha a_3}/{D},{\beta a_3}/{D})=D.$
 
 To conclude the proof that $d=D,$
 we {\it claim}  that no rational point
  $z\in \R2$  with $\den(z)<D$
lies  in $F_x.$ 
  For otherwise, for suitable integers $m,n,\delta,$
   let $z=(m/\delta,n/\delta)$
  be a counterexample  (absurdum hypothesis).
{}From 
$a_1m/\delta+a_2n/\delta=-a_3$ we get 
$a_1m +a_2n =-a_3\delta.$  On the other hand,
$a_1m+a_2n=kD$ for some integer $k$.
It follows that
 $a_3\delta=kD.$  Since   $\delta<D$  
 some prime divisor of $D$ must also be a divisor of $a_3$,
 which contradicts   $\gcd(D,a_3)=1$.
Our claim is settled, and hence
$d=D=\gcd(a_1,a_2)$,
as required to complete the proof of  (i).

\medskip
(ii)  Let  $H=\{\tilde y\in\mathbb Z^3\mid y\in F_x\cap \mathbb Q\}$
be the set of homogeneous correspondents of all rational points
of $F_x.$  Then every point $z\in F_x$ of denominator $d$ corresponds
to an integer point $\tilde z\in H$ of height $d$.
One then sees that  points of denominator $d$ lying on
the line  
$F_x$  are equidistant. Each
segment in $F_x$  joining any two consecutive
such points is regular: this follows
from   {  Lemma
 \ref{lemma:regular-simplex}(iii)$\Rightarrow$(i) 
in combination with Lemma
 \ref{lemma:wlog-isobunch}}.
The point $x$
lies in the convex hull of precisely two points $a,b\in F_x$
of denominator $d$.

Pick a rational point $s\not\in F_x$  such that the triangle $T=\conv(a,b,s) $  is regular.
Let $c=\den(s)$.  The regularity of $T$ is to the effect
that $\gcd(d,c)=1$.  Then
  { a  routine variant of the argument in  Lemma 
\ref{lemma:companion}}  
yields  rationals $q/c$ and $p/d$  (written in their
lowest terms)  which are
neighbors  in the Farey sequence  $\mathfrak F_{\max(c,d)}.$
The triangle $U=\conv((1/d,0),(0,p/d),(0,q/c))$ is regular.
By { Lemma \ref{lemma:maps}}
there is a map $\gamma \in \mathcal G_2$ such that
$\gamma(T)=U$.
In particular  $\gamma(x)=\xi=(\xi_1,\xi_2)$
for suitable irrationals $\xi_1$ and $\xi_2$.
It follows that   $G_x=G_\xi$  is generated by
$\xi_1,\xi_2,1.$
The point $\xi$ lies  on the line $F_\xi$.
Thus
$\xi_2
=p(1/d-\xi_1).$
We see that $p/d\in G_x.$   Since $1\in G_x$, from
$\gcd(p,d)=1$  we obtain  $1/d\in G_x.$

To conclude the proof, by way of contradiction  let
$u/v$ be a rational in $G_x$  with  
\begin{equation}
\label{equation:francia}
0 <u/v<1/d.
\end{equation}
{}From  $\gcd(u,v)=1$ and  
 $1\in G_x$  we obtain   $1/v\in G_x,$  whence
  $1/\lcm(d,v) \in G_x,$  where \, lcm\, denotes least
  common multiple.    Upon writing  
  $\lcm(v,d)=kd$  for some integer  $k$, since 
  $\rank(G_x)=2$ then
 $\{1/d,\xi_1\}$ is a {\it free} generating set
of  $G_x$,
and there are integers  $m,n$  such that  $m\xi_1+n/d=1/(kd),$
whence  $nk=1$,    $\,\,\,n=k=1$,  and $\,\,v=d$.
This contradicts (\ref{equation:francia}).
\end{proof}

We are now ready to introduce the numerical invariant
  which, in combination  with  $G(x)$,  yields a
complete invariant for $\orb(x)$. 

\begin{definition}
\label{definition:cx}
For   any  $n=1,2,\ldots$ and
$e$-dimensional
rational affine space $F$  in $\Rn$
($e=0,1,\ldots, n$),  we define
 $c_F$  as the smallest possible denominator
of a vertex of a regular $n$-simplex  $\conv(v_0,\ldots,v_n)
\subseteq \Rn$
with $v_0,\ldots,v_e\in F.$
\end{definition}
For every $x\in\Rn$  we write
$c_x$ as an abbreviation of $c_{F_x}.$

 \smallskip
The two points $a,b$  of { Example \ref{example:nonbasta}}
have  $c_a=1$  and $c_b=2.$

\begin{lemma}
\label{lemma:cx}
Let $n =2,3,\ldots$ and $x\in \mathbb R^n.$
\begin{itemize}
\item[(i)]  If $\rank(G_x)=n+1$, i.e.,  if
$F_x$ is $n$-dimensional,  then $ c_x=1.$

 \item[(ii)] If  $\rank(G_x)=1$, i.e., if
$F_x$ is $0$-dimensional,  then $c_x=1.$

 \item[(iii)]  Suppose  $F_x$  is  $e$-dimensional,
 with $0 <  e < n$,  meaning that
 $\rank(G_x)\not\in\{1,n+1\}$.
   Write for short  $d$ instead of $d_{F_x}$.
 If $d\in \{1,2\}$ then $c_x=1.$
  If   $d\geq 3$ then $c_x<d/2$ and $\gcd(d,c_x)=1.$
\end{itemize}
\end{lemma}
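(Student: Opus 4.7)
Cases (i) and (ii) are direct: for (i), since $F_x=\Rn$, the condition on the vertices is vacuous, so the standard simplex $\conv(O,e_1,\ldots,e_n)$ witnesses $c_x=1$; for (ii), $F_x=\{x\}$ is a single rational point and the routine construction in Remark \ref{remark:generalize} extends $x$ to a regular $n$-simplex with integer auxiliary vertices, again giving $c_x=1$.

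For (iii), the common set-up is to invoke Lemma \ref{lemma:wlog-isobunch}(i) to pick a regular $e$-simplex $\conv(u_0,\ldots,u_e)\subseteq F_x$ whose vertices all have denominator $d$, and then to extend $\tilde u_0,\ldots,\tilde u_e$ to a basis $\tilde u_0,\ldots,\tilde u_e,w_1,\ldots,w_{n-e}$ of $\mathbb Z^{n+1}$. Denote by $c_j$ the last coordinate of $w_j$. Because the last-coordinate projection $\mathbb Z^{n+1}\to\mathbb Z$ is surjective while sending each $\tilde u_i$ to $d$, the images of the $w_j$'s must generate $\mathbb Z/d\mathbb Z$; equivalently, $\gcd(c_1,\ldots,c_{n-e},d)=1$.

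Whenever $d\in\{1,2\}$, or $d\geq 3$ with $n-e\geq 2$, this coprimality combined with standard arguments (Bezout, together with density of primitive vectors in every coset of a finite-index sublattice of $\mathbb Z^{n-e}$ when $n-e\geq 2$) yields a primitive $\alpha=(\alpha_1,\ldots,\alpha_{n-e})\in\mathbb Z^{n-e}$ with $\sum\alpha_j c_j\equiv 1\pmod{d}$. Extending $\alpha$ to the top row of a $GL(n-e,\mathbb Z)$ matrix and then correcting by a multiple of $\tilde u_0$ replaces $w_1$ with a primitive integer vector of last coordinate exactly $1$, which is the homogeneous correspondent of an integer vertex $v_{e+1}$; the remaining basis vectors, shifted to positive last coordinates by further multiples of $\tilde u_0$, furnish the homogeneous correspondents of $v_{e+2},\ldots,v_n$. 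The resulting regular $n$-simplex witnesses $c_x=1$, which for $d\geq 3$ trivially satisfies both $c_x<d/2$ and $\gcd(c_x,d)=1$.

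The remaining delicate case, and the main obstacle, is $d\geq 3$ with $e=n-1$, so $n-e=1$; here $\gcd(c_1,d)=1$ from the surjectivity, but the only unimodular moves on the single vector $w_1$ are $w_1\mapsto\pm w_1$. Combined with shifts by multiples of $\tilde u_0$, the achievable positive last coordinates are exactly the positive integers in the two residue classes $\pm c_1\pmod{d}$, so the smallest is $c:=\min(c_1\bmod d,\ d-(c_1\bmod d))$. The two candidates sum to $d$ and are both coprime to $d$; equality $c=d/2$ is ruled out since $\gcd(d/2,d)\geq 2$ for $d\geq 3$, forcing $c<d/2$ with $\gcd(c,d)=1$. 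It remains to check that this $c$ really is $c_x$ rather than merely an upper bound: for this I would display $F_x$ as a rational hyperplane $\{h\cdot x=k\}$ with $d=\gcd(h_1,\ldots,h_n)$ and $\gcd(d,k)=1$ (in the spirit of Lemma \ref{lemma:missing}), observe that its homogenization in $\mathbb R^{n+1}$ is cut out by the primitive integer vector $(h_1,\ldots,h_n,-k)$, and note that the regularity of any simplex $\conv(u_0,\ldots,u_{n-1},s)$ with $u_i\in F_x$ is equivalent to $(h_1,\ldots,h_n,-k)\cdot\tilde s=\pm 1$, i.e.\ $k\cdot\den(s)\equiv\mp 1\pmod{d}$. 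Thus every admissible $\den(s)$ lies in one of the two residue classes $\pm k^{-1}\pmod{d}$, and the minimum positive such denominator is precisely the $c$ produced above.
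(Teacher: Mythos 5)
Your parts (i) and (ii) coincide with the paper's (the paper also disposes of (ii) by citing Remark \ref{remark:generalize}). For (iii) you take a genuinely different, and in fact more informative, route. The paper argues ``top down'': it starts from a regular $n$-simplex that already realizes the minimum $c_x$, normalizes it in two steps (so that the vertices in $F_x$ all have denominator $d$ and the minimal denominator $c_x$ sits at an outside vertex, then pushes all outside vertices down to denominator $c_x$), and reads off $\gcd(c_x,d)=1$ from unimodularity of the lifted basis, $c_x\le d/2$ from minimality (via the substitution $\tilde y_n\mapsto\tilde y_0-\tilde y_n$), and finally $c_x<d/2$. You argue ``bottom up'': you extend a height-$d$ basis of the lifted span of $F_x$ to a basis of $\mathbb Z^{n+1}$ and classify the reachable last coordinates under $GL(n-e,\mathbb Z)$ moves and shifts by the $\tilde u_i$, supplementing this in the codimension-one case with the lower bound coming from the primitive functional $(h,-k)$ that cuts out the homogenized hyperplane. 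This buys strictly more than the paper states: $c_x=1$ whenever $n-e\ge 2$, so nontrivial values of $c_x$ occur only in codimension one (the only case arising for $n=2$), where you get the closed formula $c_x=\min\bigl(k^{-1}\bmod d,\ d-(k^{-1}\bmod d)\bigr)$. Two justifications should be tightened, though neither is fatal. First, it is \emph{not} true that every coset of a finite-index sublattice of $\mathbb Z^{m}$, $m\ge 2$, contains a primitive vector (the trivial coset of $2\mathbb Z^2$ does not); what you need is that the particular coset $\{\alpha\mid \sum_j\alpha_jc_j\equiv 1\pmod d\}$ contains one, and this is true because any prime dividing all coordinates of an element of that coset, and dividing $d$ (as it must, since the coset is stable under adding $de_1$), would divide $1$; a clean proof picks a basis $f_1,\dots,f_m$ of $\mathbb Z^m$ adapted to the kernel of $\alpha\mapsto\sum_j\alpha_jc_j\bmod d$, so that the coset visibly contains $f_1+af_m$ for suitable $a$. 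Second, your stated equivalence ``regularity of $\conv(u_0,\dots,u_{n-1},s)$ $\Leftrightarrow$ $(h,-k)\cdot\tilde s=\pm1$'' requires the facet in $F_x$ to be regular for the reverse implication; but only the forward implication enters your lower bound, and that direction holds for every admissible simplex, so the argument stands.
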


\begin{proof}
(i)  The hypothesis  means that $x$ is totally irrational. 
Then $F_x=\Rn.$  It is easy to construct  a
regular simplex  $T$ in $\Rn$  such that the denominator
of each vertex of $T$ is equal to 1.   By definition,
$c_x=1.$

(ii) {  By  Remark \ref{remark:generalize}.}

(iii) 
Let  $V=\conv(v_0,\ldots,v_n)\subseteq \Rn$   be
a regular $n$-simplex
with  $v_0,\ldots,v_e\in F_x$,
and with  a vertex $v_i$  satisfying 
$\den(v_i)=c_x.$ 

\smallskip
\noindent{\it Claim 1.}  
There is a regular 
$n$-simplex  $W=\conv(w_0,\ldots,w_n)
\subseteq \Rn$
whose vertices $w_0,\ldots,w_e$ lie in $F_x$,
and with   
$\den(w_n)=c_x.$

As a matter of fact,  if  $\den(v_j)=c_x$
  for some $j=e+1,\ldots,n$, then
  without loss of generality  $j=n,$ and we are done.
Otherwise,  
 for all  $j=e+1,\ldots,n$ we have the inequality 
   $\,\,\,c_x<\den(v_j)$,  and we can  write   $\den(v_0)=c_x$ 
 without loss of generality.
 Passing to homogeneous correspondents in $\mathbb R^{n+1}$,
 for some  integer $k>0$
the height    (= last coordinate)  $l$ of the   vector  
$r_n=\tilde v_n-k\tilde v_0\in \mathbb R^{n+1}$ will  satisfy  $1\leq l \leq$
the height of $\tilde v_0$.   Thus by definition of
$c_x$,  both  $r_n$  and $\tilde v_0$  have the
same height $c_x.$  Let  $z_n \in \Rn$ be the rational point
 whose homogeneous
correspondent  $\tilde z_n$   equals   $r_n$.
Then   $\den(z_n)=c_x=$
  height of $r_n$.  The regular $n$-simplex
$W=\conv(v_0,\ldots,v_{n-1},z_n)$ satisfies the
conditions of our claim.  

\smallskip
An application of 
  {  Lemmas \ref{lemma:wlog-isobunch} and 
\ref{lemma:subbasis}}
now yields

\medskip
\noindent{\it Claim 2.}
There is a regular 
$n$-simplex  $Y=\conv(y_0,\ldots,y_n)
\subseteq \Rn$
with  $y_0,\ldots,y_e\in F_x$,
satisfying the identities 
$\den(y_n)=c_x$ and $ \den(y_0)=\dots=\den(y_e)=d.$

\medskip

We  conclude the proof arguing by cases: 

\medskip

   If $d=1$ we are done, because $1\leq c_x\leq d$.

\smallskip

If  $d=2$ we {\it claim}  that  $c_x=1$.
If not (absurdum hypothesis),  $c_x=2$  and  not all of
$\den(y_{e+1}),\ldots,\den(y_n)$ can be even,
because the regularity of the $n$-simplex 
$Y=\conv(y_0,\ldots,y_n)$  entails the unimodularity
of the matrix  whose columns are $\tilde y_0,\ldots,\tilde y_n$.
Thus for some $i=e+1,\ldots,n,\,\,$  
$\den(y_i)$ is odd and $\geq 3$.
For  some  integer  $k\geq 1$, 
the height of the integer vector
$l_i =\tilde y_i-k\tilde y_0\in \mathbb R^{n+1}$ is
 equal to 1.   Let  the rational point
$u_i\in \Rn$  be defined by 
$\tilde u_i=l_i.$  Replacing in $Y$
the vertex   
$y_i$   by  $u_i$, the resulting
 $n$-simplex
$\conv(y_0,\ldots,y_{i-1},u_i,y_{i+1}\ldots,y_n)$
is regular.  Since  $\den(u_i)=1$  then
$c_x=1,$  a contradiction with our absurdum hypothesis
$c_x=2$.  
We conclude that 
$c_x=1$, as required to settle our claim.

\smallskip
If  $d\geq 3,$  for every  $l=e+1,\ldots,n-1$  such that $\den(y_l)>c_x$
there is an integer  $k_l\geq 1$ such that
the height $h$ of the 
vector  $s_l=\tilde y_l-k_l\tilde y_n\in \mathbb Z^{n+1}$
satisfies  $1\leq h \leq c_x$,  whence  $h=c_x.$
Letting  the rational point  $u_l$  be defined by
$\tilde u_l= s_l$,  replace  vertex $y_l\in Y$  by
$u_l.$ 
We then obtain  a new regular
$n$-simplex   with at least
two vertices not in $F_x$, both having denominator
equal to $c_x$.
This process terminates with a regular $n$-simplex
$Z=\conv(y_0,\ldots,y_e, u_{e+1},\ldots,u_{n-1},y_n)$
whose first  $e+1$    vertices lie in $F_x$
and  have denominator $d$, 
 and whose remaining vertices have denominator $c_x$
 and lie outside $F_x.$ 
The regularity of $Z$ entails
that $c_x$  and $d$ are relatively prime. Since  $d>1$  then  
$c_x<d.$  Since  $c_x$  is minimal, then
$c_x\leq d/2$. Actually,  $c_x<d/2$.
For otherwise,  $d$ is even,
 $d\geq 4$,  and
$\gcd(c_x,d)=c_x\geq 2,$  contradicting 
 the regularity of $Z$.
 \end{proof}

\section{Rank 2: the classification theorem} 
Our classification of $\mathcal G_2$-orbits is completed
as follows:

\begin{theorem}
\label{theorem:rank2}
If  $x\in \R2$ and  $\rank(G_x)=2$
then  $\orb(x)=\{x'\in\R2\mid (G_x,c_x)=(G_{x'},c_{x'})\}$. 
\end{theorem}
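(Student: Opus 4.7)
\emph{Forward direction.} Every $\gamma \in \mathcal G_2$ has the form $\gamma(z) = Uz + v$ with $U \in GL(2,\mathbb Z)$ and $v \in \mathbb Z^2$, so $G_{\gamma(x)} = G_x$ is immediate; since $\gamma$ permutes rational lines, preserves minimum denominators, and carries regular $2$-simplexes to regular $2$-simplexes, it follows that $F_{\gamma(x)} = \gamma(F_x)$, $d_{F_{\gamma(x)}} = d_{F_x}$, and $c_{\gamma(x)} = c_x$.

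\emph{Converse: normal form.} Assume $(G_x,c_x) = (G_{x'},c_{x'})$. By Lemma \ref{lemma:missing}(ii) the denominator $d$ of the smallest positive rational in $G_x = G_{x'}$ equals $d_{F_x} = d_{F_{x'}}$; write $c = c_x = c_{x'}$. Since $\rank(G_x) = 2$, Lemma \ref{lemma:missing} shows the denominator-$d$ points on $F_x$ are equally spaced, so the (irrational) point $x$ lies in the relative interior of a unique segment $\conv(a,b)$ joining two consecutive such points. By Definition \ref{definition:cx}, Claim 2 in the proof of Lemma \ref{lemma:cx}, and Lemma \ref{lemma:subbasis}, one may choose $s \notin F_x$ with $\den(s) = c$ so that $T = \conv(a,b,s)$ is a regular $2$-simplex. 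Lemma \ref{lemma:companion} produces integers $p, q$ with $\comp(p/d) = q/c$, and the canonical triangle $U_p = \conv((1/d,0),(0,p/d),(0,q/c))$ is regular with vertex denominators $d, d, c$; by Lemma \ref{lemma:maps} there is $\gamma_1 \in \mathcal G_2$ sending $a, b, s$ to these vertices in order. Doing the same for $x'$ yields $a', b', s'$, integers $p', q'$ with $\comp(p'/d) = q'/c$, and $\gamma_1' \in \mathcal G_2$. Setting $\xi = \gamma_1(x)$ and $\xi' = \gamma_1'(x')$, affinity gives
\[
\xi = \bigl((1-t)/d,\; tp/d\bigr), \qquad \xi' = \bigl((1-t')/d,\; t'p'/d\bigr)
\]
for some irrational $t, t' \in (0,1)$.

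\emph{Matching.} A direct computation gives $G_\xi = \mathbb Z(1/d) + \mathbb Z(t/d)$ and $G_{\xi'} = \mathbb Z(1/d) + \mathbb Z(t'/d)$. The equality $G_\xi = G_{\xi'}$, after multiplying by $d$, becomes $\mathbb Z + \mathbb Z t = \mathbb Z + \mathbb Z t'$, which for irrational $t, t' \in (0,1)$ forces either $t = t'$ (Case A) or $t + t' = 1$ (Case B). In Case A, Lemma \ref{lemma:maps} furnishes $\gamma_2 \in \mathcal G_2$ with $\gamma_2((1/d,0)) = (1/d,0)$, $\gamma_2((0,p/d)) = (0,p'/d)$, $\gamma_2((0,q/c)) = (0,q'/c)$; affinity yields $\gamma_2(\xi) = \xi'$, so $\gamma = (\gamma_1')^{-1}\gamma_2\gamma_1$ satisfies $\gamma(x) = x'$. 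In Case B, let $\tau \in \mathcal G_2$ be the ``edge-swap'' given by Lemma \ref{lemma:maps} that interchanges $(1/d,0)$ and $(0,p'/d)$ while fixing $(0,q'/c)$; then $\tau(\xi')$ sits at parameter $1 - t' = t$ on $\conv((1/d,0),(0,p'/d))$, reducing Case B to Case A applied to $\xi$ and $\tau(\xi')$.

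\emph{Expected obstacle.} The central difficulty is that $G_x$ alone partitions each rank-$2$ value into up to $\phi(d)/2$ distinct $\mathcal G_2$-orbits (Example \ref{example:nonbasta}), so the single integer $c_x$ must absorb all residual ambiguity: both the choice of the admissible $p$ in Lemma \ref{lemma:companion} and the orientation of the bracketing edge $\conv(a,b)$. The Case A/Case B dichotomy, together with the edge-swap $\tau$ and Lemma \ref{lemma:maps}, is exactly the mechanism that makes the two normal forms match; verifying that this mechanism exhausts the ambiguity is the crux.
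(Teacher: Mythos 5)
Your proof is correct and follows essentially the same route as the paper's: normalize both points onto a canonical regular triangle with vertex denominators $d,d,c$ via Lemmas \ref{lemma:cx}, \ref{lemma:companion} and \ref{lemma:maps}, then exploit the free rank-$2$ structure of $G_x$ to reduce to the dichotomy $t=t'$ versus $t+t'=1$ (the paper's Cases $h=1$ and $h=-1$). The only cosmetic differences are that the paper sends both triangles to \emph{one} canonical triangle rather than two, and in the second case it writes an explicit unimodular $3\times 3$ matrix realizing the edge swap of $\conv(\bar a,\bar b)$, whereas you obtain the same map abstractly from Lemma \ref{lemma:maps}.
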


\begin{proof}
The  rational affine subspace
$F_x\subseteq \R2$  is the only rational  line in $\R2$ 
passing through 
$x$.   
 We first prove 
$\orb(x)\subseteq \{x'\in\R2\mid G_x=G_{x'} \mbox{ and }
c_x=c_{x'}\}$.  
If $x$ and $x'$ satisfy $\delta(x)=x'$ 
for some $\delta\in \mathcal G_2$,
then we immediately see that  $G_{x}=G_{x'}.$  
Further, the map 
 $\gamma$ sends $F_x$ one-one onto $F_{x'}$. Trivially,
 $\gamma$ preserves denominators of rational
points.  { Lemma \ref{lemma:regular-simplex}(i)$\Leftrightarrow$(iii)}  
shows that $\gamma$ also induces a one-one correspondence
between regular simplexes. Thus $c_x=c_{x'}$.

\smallskip
  For the converse inclusion
$\orb(x)\supseteq \{x'\in\R2\mid G_x=G_{x'} \mbox{ and }
c_x=c_{x'}\}$, let us suppose
 $G_x=G_{x'} \mbox{ and }
c_x=c_{x'}$,   looking for an affine transformation
  $\delta\in\mathcal G_2$ such that  $\delta(x)=x'$.
For suitable integers 
$a_1,a_2,a_3$ with  $\gcd(a_1,a_2,a_3)=1$,  and
 $a_1$ and $a_2$ not both zero, we can write
$
F_x  =  \{(y_1,y_2) \in \R2 \mid a_1y_1+a_2y_2+a_3=0\}.
$
By  { Lemma \ref{lemma:missing}}, 
$d_{F_x}$ coincides with  the  denominator of the smallest
positive nonzero rational point of
 $G_x.$  Let us write  $d$ instead of $d_{F_x}$. 
The points of denominator $d$ lying on 
the line $F_x$   are at equal distance. Each
segment in $F_x$  joining any two consecutive
such points is regular  { by Lemmas
 \ref{lemma:regular-simplex}(iii)$\Rightarrow$(i)
 and \ref{lemma:wlog-isobunch}(ii)}.
 The point $x$
lies in the convex hull of precisely two points $a,b\in F_x$
of denominator $d$.

{ The same
  construction in the proof of Lemma \ref{lemma:cx}(iii)}
yields a rational  point 
$s\not\in F_x$  such that the triangle $T=\conv(a,b,s) $  is regular,
and $\den(s)=c_x.$ Let us write $c$ instead of $c_x.$ 

The denominators of 
$a,b,s$ are equal to   $d,d,c$ respectively.
As a consequence of the regularity of $T$,  $\gcd(c,d)=1.$
By   { Lemmas \ref{lemma:cx}}
the  integers $c$  and $d$ satisfy the hypotheses of
{ Lemma \ref{lemma:companion}}, whence
we obtain integers  $p<d$  and  $q<c$ such
that  $q/c=\comp(p/d)$ and
\begin{equation}
\label{equation:unimod}
qd-pc=\pm1.
\end{equation}
 
Let the triangle  $\bar T=\conv(\bar a,\bar b, \bar s)$ be defined by 
$
\bar a =(1/d,0),  \bar b=(0,p/d), \bar s=(0,q/c).
$
Then $\bar T$  is regular and the denominators of
its vertices are respectively equal to  $d,d,c$.
{ By Lemma  \ref{lemma:maps}}  some $\alpha \in \mathcal G_2$ maps
$a,b,s$  to $\bar a,\bar b, \bar s$.  For suitable
irrational numbers  $\xi_1,\xi_2$ we can write
 $\alpha(x)={\bar x}=
(\xi_1,\xi_2)\in \conv(\bar a,\bar b)$.
Evidently,   $G_x=G_{\bar x}$, the latter being 
  the subgroup of $\mathbb R$ generated by $\xi_1,\xi_2,1$.
Since  ${\bar x}\in F_{\bar x} = \alpha(F_{x})$ then
\begin{equation}
\label{equation:due}
\xi_2=p(1/d-\xi_1). 
\end{equation}
Thus the  subgroup  $H$  of $\mathbb R$  generated
by $\xi_1$ and $1/d$ contains $G_{\bar x}$  as a subgroup, $G_{\bar x}\subseteq H.$
Conversely,  $H\subseteq G_{\bar x}$.
As a matter of fact, from 
 $\gcd(p,d)=1$ it  follows that 
$lp+md=1$ for suitable integers $l,m$.
Thus    
$l\cdot p/d+m\cdot 1=1/d$, and hence  
$1/d\in G_{\bar x}$,  because    
 $p/d  \in G_{\bar x}$,  by (\ref{equation:due}).  We have shown
 $
 G_x=G_{\bar x}=H = \mathbb Z\xi_1+\mathbb Z\frac{1}{d}.
 $
{}From the hypothesis  $\rank(G_x) =2$ it follows that
\begin{equation}
\label{equation:free}
\mbox{$ G_x$ is  freely generated by
 $\xi_1$ and $1/d$.}
 \end{equation}

We now consider the point  $x'$  and its associated
group $G_{x'}=G_x.$
By  {  Lemma \ref{lemma:missing}}
the  integer   $d'=d_{F_{x'}}$ 
coincides with  the  denominator of the smallest
positive nonzero rational point of $G_{x'}$.
Since  $G_{x'}=G_x$  then   $d'=d.$
By hypothesis,  $c_x=c_{x'}=c$.   
The point $x'$
lies in the convex hull of precisely two points $a',b'\in F_{x'}$
of denominator $d$.  As above, there
is a rational point    $s'\not\in  F_{x'}$  such that the triangle 
$T'=\conv(a',b',s') $  is regular,
and $\den(s')=c$.
Since the denominators of the vertices  $a',b',s'$
  are equal to     $d,d,c$,  some   $\beta
\in \mathcal G_2$ will map $T'$
onto {\it the same} triangle 
$\alpha(T)= \conv(\bar a,\bar b, \bar s)$.
In particular,  $\beta(x')={\bar x}' $
for some point  ${\bar x}'=(\xi'_1,\xi'_2)\in \conv(\bar a,\bar b)$.
It follows that
\begin{equation}
\label{equation:freeprime}
\mbox{$G_{x'}=G_x$ is freely generated by
 $\xi'_1$ and $1/d$.}
 \end{equation}
Recalling (\ref{equation:free}),
some  $2\times 2$  integer unimodular matrix  $U$
maps the column vector $(\xi_1,1/d)$ to the column vector 
 $(\xi'_1,1/d)$. Since $\xi_1$  is irrational,
  the bottom row of $U$ has the form  $(0,1)$.
 So, for some  $h=\pm 1$,
\begin{equation}
\label{equation:croce}
\left(
\begin{matrix}
h &k \\[0.1cm]
0&1
 \end{matrix}
     \right)\left(
\begin{matrix}
\xi_1 \\[0.1cm]
1/d
 \end{matrix}
     \right) =
     \left(
\begin{matrix}
\xi'_1 \\[0.1cm]
1/d
 \end{matrix}
     \right)
\end{equation}

\medskip
To get  the desired map $\delta\in  \mathcal G_2$  sending
$x$ to $x'$ we have only to exhibit  a  map
$\gamma\in \mathcal G_2$  with  $\gamma({\bar x})={\bar x}'$.

\bigskip
\noindent{\it Case 1:  $h=1$}.
Then  $k=0$, because  
   $0<\xi'_1<1/d$, and
     $0<\xi_1<1/d$.
It follows that ${\bar x}={\bar x}'$  and the identity map
on   $\R2$  provides the required $\gamma\in\mathcal G_2.$

\bigskip
\noindent{\it Case 2:  $h=-1$}.
Then {}from $\xi'_1=-\xi_1+k/d$  we immediately
obtain $k=1,$  because  
   $0<\xi'_1<1/d$, and
     $0<\xi_1<1/d$.
{}From (\ref{equation:croce}) have the identities 
\begin{equation}
\label{equation:primouno}
\xi'_1=-\xi_1+1/d,  \quad
\xi'_2=p\xi_1.
\end{equation}
Since  $\xi'_2$  lies in $F_x$,
\begin{equation}
\label{equation:primoduebis}
\xi'_2=p(1/d-\xi'_1).
\end{equation}

\medskip
To conclude the proof, recalling (\ref{equation:unimod}),
 let  
$u=cp-dq=\pm 1.$ 
Let    the integer matrix  $M$ be given by

$$
M= \left(
\begin{matrix}
 dq/u &  c/u &    -q/u   \\[0.3cm]
p-dpq/u & -dq/u &    pq/u  \\[0.3cm]
0&  0 &   1  
 \end{matrix}
     \right)
$$

\medskip
\noindent
Then  $M$ is unimodular and
 maps  $(\xi_1,\xi_2,1)$ to $(\xi'_1,\xi'_2,1)$.
To see this, one verifies the identity

\medskip
\noindent 
$$
M\left(
\begin{matrix}
\xi_1 \\[0.1cm]
p(1/d-\xi_1) \\[0.1cm]
1 
 \end{matrix}
     \right) =
     \left(
\begin{matrix}
-\xi_1+1/d \\[0.1cm]
p\xi_1\\[0.1cm]
1 
 \end{matrix}
     \right)
$$

\medskip
\noindent
using
(\ref{equation:due})-(\ref{equation:freeprime}) and
(\ref{equation:primouno})-(\ref{equation:primoduebis}).
{}From $M$ we immediately obtain
the desired $\gamma\in\mathcal G_2$
satisfying   $\gamma({\bar x})={\bar x}'.$  
\end{proof}

 \medskip

The following result states that the pair  $(G_x,c_x)$ is a complete invariant
for the orbit of $x\in \R2:$  

\begin{corollary} 
Two points $x,x'\in\R2$  have the same orbit
iff $G_x=G_{x'}$ and $c_x=c_{x'}$.
\end{corollary}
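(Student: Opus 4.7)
The plan is a case analysis on $\rank(G_x)$, combining the three classification results already established.

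\emph{Necessity.} Suppose $\gamma(x)=x'$ for some $\gamma\in\mathcal{G}_2$, written as $\gamma(y)=Uy+t$ with $U\in GL(2,\mathbb{Z})$ and $t\in\mathbb{Z}^2$. The coordinates of $x'$ are then integer combinations of $x_1,x_2,1$, and applying $\gamma^{-1}\in\mathcal{G}_2$ gives the reverse containment, so $G_x=G_{x'}$. For the equality $c_x=c_{x'}$, note that $\gamma$ sends $F_x$ bijectively onto $F_{x'}$: being affine with rational coefficients, it maps rational hyperplanes to rational hyperplanes, so it carries the minimal rational affine subspace through $x$ to the minimal rational affine subspace through $x'$. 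Moreover $\gamma$ preserves denominators of rational points (since $U$ is integer unimodular and $t$ is integer), and by Lemma \ref{lemma:regular-simplex}(i)$\Leftrightarrow$(iii) it preserves the property of being a regular simplex. Hence the minimal denominator of a vertex of a regular $2$-simplex containing an edge in $F_x$ equals the corresponding quantity for $F_{x'}$, i.e., $c_x=c_{x'}$.

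\emph{Sufficiency.} Assume $G_x=G_{x'}$ and $c_x=c_{x'}$. Since $\rank(G_x)$ is determined by $G_x$, the ranks of $G_x$ and $G_{x'}$ agree, and both lie in $\{1,2,3\}$. If $\rank(G_x)=3$, Proposition \ref{proposition:generic} gives $\orb(x)=\orb(x')$ from $G_x=G_{x'}$ alone. If $\rank(G_x)=1$, Proposition \ref{proposition:rank1} likewise gives the conclusion from $G_x=G_{x'}$. If $\rank(G_x)=2$, the conclusion is exactly the content of Theorem \ref{theorem:rank2}, which requires both hypotheses $G_x=G_{x'}$ and $c_x=c_{x'}$.

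In the rank $1$ and rank $3$ cases the second hypothesis is automatically fulfilled since $c_x=c_{x'}=1$ by Lemma \ref{lemma:cx}(i),(ii); it is only for rank $2$ that the companion invariant becomes indispensable, as Example \ref{example:nonbasta} already shows. There is no genuine obstacle here: the corollary is a unified restatement of the three classification results, the only new ingredient being the straightforward observation in the necessity direction that elements of $\mathcal{G}_2$ preserve the group $G_x$, the rational affine hull $F_x$, and regularity of simplexes.
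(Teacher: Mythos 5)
Your proposal is correct and follows essentially the same route as the paper: the sufficiency direction is exactly the paper's case split into Proposition \ref{proposition:generic} (rank 3), Proposition \ref{proposition:rank1} (rank 1), and Theorem \ref{theorem:rank2} (rank 2), with Lemma \ref{lemma:cx}(i)--(ii) guaranteeing $c_x=1$ in the extreme ranks. The necessity argument you spell out (invariance of $G_x$, of $F_x$, of denominators, and of regularity under $\mathcal G_2$) is the same one the paper gives inside the proof of Theorem \ref{theorem:rank2}.
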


\begin{proof}
The  case $\rank(G_x)=2$ is taken care of by the { foregoing 
result. }  The cases  $\rank(G_x)\in\{1,3\}$ are dealt with
by {    Lemma
\ref{lemma:cx}(i)-(ii)}.  Here   $c_x$ is automatically
equal to 1, and its role in the classification of
$\orb(x)$  is immaterial.
\end{proof}


 \begin{corollary}
 \label{corollary:tutti}
 For  $x=(x_1,x_2)$ ranging over all points in $\R2$  let the subgroup
 $G_x$ of $\mathbb R$  be defined by
 $G_x=\mathbb Z+\mathbb Zx_1+\mathbb Zx_2.$
 Let $c_x$  be as in Definition \ref{definition:cx}.
 Then the possible classifiers  $(G_x,c_x)$
 of $\mathcal G_2$-orbits  are as follows: 
\begin{itemize}
\item[(i)] If  $\rank G_x\in\{1,3\}$ then automatically $c_x=1.$
\item[(ii)] If  $\rank G_x=2$,   
let  $d$ be the denominator of the smallest  nonzero
positive rational element of $G_x$.
If   $d\leq 4$  then $c_x=1.$
If  $d = 5,6,\ldots$  then any integer $c$   relatively
prime with $d$ and such that $0< c<d/2$  may occur as $c_x$. 
Thus precisely  $\max(1,\phi(d)/2)$ distinct orbits
are classified by the same group $G_x.$

\item[(iii)]
No other pair  $(G_x,c_x)$ may occur.  
\end{itemize}
\end{corollary}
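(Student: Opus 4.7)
The plan is to assemble Lemma \ref{lemma:cx} (necessary conditions on $c_x$), Lemma \ref{lemma:companion} (existence of a companion pair $(p/d,q/c)$), and Theorem \ref{theorem:rank2} (orbits classified by $(G_x,c_x)$), then close with a counting argument. For the necessary side, part (i) is immediate from parts (i)--(ii) of Lemma \ref{lemma:cx}. For $\rank(G_x)=2$, Lemma \ref{lemma:cx}(iii) forces $c_x=1$ when $d=d_{F_x}\le 2$, and constrains $c_x$ to an integer coprime to $d$ with $0<c_x<d/2$ when $d\ge 3$; this interval contains only $c_x=1$ when $d\in\{3,4\}$, so the ``$d\le 4$'' clause of (ii) is settled and part (iii) will follow as the contrapositive of the resulting list.

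The substantive step is realisability: given a rank-$2$ group $G$ with smallest positive rational $1/d$ and an integer $c$ with $0<c<d/2$ and $\gcd(c,d)=1$ (so $d\ge 5$), I need a point $x\in\R2$ with $G_x=G$ and $c_x=c.$ Lemma \ref{lemma:companion} supplies integers $p,q$ with $q/c=\comp(p/d)$ and $qd-pc=\pm 1.$ I then pick an irrational $\xi_1\in G\cap(0,1/d)$ freely generating $G$ together with $1/d$ (obtained from any free generator of $G$ by shifting by an integer multiple of $1/d$) and set $x=(\xi_1,\,p(1/d-\xi_1)).$ A short check using $\gcd(p,d)=1$ gives $G_x=G$, while $\xi_1$ irrational forces $F_x$ to be the unique rational line $\{py_1+y_2=p/d\}$, carrying the denominator-$d$ points $\bar a=(1/d,0)$ and $\bar b=(0,p/d);$ the triangle $\conv(\bar a,\bar b,(0,q/c))$ is regular by (\ref{equation:unimod}), yielding $c_x\le c.$

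The main obstacle is the matching lower bound $c_x\ge c.$ Any regular simplex $\conv(\bar a,\bar b,\bar s)$ with $\den(\bar s)=c'$ has $3\times 3$ unimodular matrix of homogeneous correspondents; expansion of its determinant along the first row reduces to $pc'-d(v+up)=\pm 1$ where $\tilde{\bar s}=(u,v,c').$ Hence $pc'\equiv\pm 1\pmod d$, i.e.\ $c'\equiv\pm p^{-1}\pmod d$, and the smallest positive solution to this congruence is the smaller of the two Farey-neighbour denominators of $p/d$ in $\mathfrak F_d$, namely $c=\den(\comp(p/d))$ by the definition of the companion. Thus $c_x=c$ and realisability is complete; Theorem \ref{theorem:rank2} then guarantees that distinct admissible $c$'s sharing the same $G$ index distinct orbits. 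For the count, the $\phi(d)$ residues coprime to $d$ pair off as $\{c,d-c\}$ with exactly one member in $(0,d/2)$, giving $\phi(d)/2$ orbits when $d\ge 3$ and a single orbit otherwise, uniformly $\max(1,\phi(d)/2)$, completing (ii).
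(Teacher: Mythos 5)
Your proposal is correct and follows the same route as the paper, whose proof is simply a citation of Lemmas \ref{lemma:preconverse}, \ref{lemma:companion}, \ref{lemma:cx} and Theorem \ref{theorem:rank2}: necessity of the constraints on $c_x$ from Lemma \ref{lemma:cx}, realisability of every admissible $c$ from Lemma \ref{lemma:companion} together with the canonical-form construction of Theorem \ref{theorem:rank2}, distinctness of orbits from Theorem \ref{theorem:rank2}, and the $\max(1,\phi(d)/2)$ count from the pairing $\{c,d-c\}$. Your explicit lower bound $c_x\ge c$ via the congruence $pc'\equiv\pm 1\pmod d$ (together with the implicit reduction, via Lemma \ref{lemma:subbasis}, to simplexes whose $F_x$-edge is $\conv(\bar a,\bar b)$) is a welcome filling-in of a detail the paper leaves to the cited lemmas.
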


\begin{proof} 
{ From Lemmas 
 \ref{lemma:preconverse}, \ref{lemma:companion}, \ref{lemma:cx}
 and Theorem \ref{theorem:rank2}.}
\end{proof}

The complete classification
of $\mathcal G_1$-orbits is now easily obtained as follows:

\begin{corollary}
\label{corollary:onedim}
For  $x$ ranging over all points in $\mathbb R$  let the subgroup
 $G_x$ of $\mathbb R$  be defined by
 $G_x=\mathbb Z+\mathbb Zx.$
 Let $c_x$  be as in Definition \ref{definition:cx}.
 Then the possible classifiers  $(G_x,c_x)$
  of $\mathcal G_1$-orbits   are as follows: 
\begin{itemize}
\item[(i)]
In case  $\rank(G_x)=2$  then  $c_x=1$. 
For  all $y\in \mathbb R,$  $\orb(x)=\orb(y)$  iff $G_x=G_y$.
\end{itemize}

\noindent
In case $\rank(G_x)=1$,
letting  $d$ denote the denominator of the
smallest nonzero positive rational element of $G_x$,

\begin{itemize}
\item[(ii)] If $d \leq 4,$  then
$c_x=1$.  For 
 all $y\in \mathbb R,$  $\orb(x)=\orb(y)$  iff $G_x=G_y$.

\medskip 
\item[(iii)] If $d > 4,$
 precisely  $\max(1,\phi(d)/2)$ distinct orbits
are classified by the same group
$G_x.$  These orbits are in one-one correspondence
with the totality of  pairs  $(G_x,c)$ with $c\in \mathbb Z,$
 $1 \leq  c < d/2$
and  $\gcd(c,d)=1.$  
 \end{itemize}
\end{corollary}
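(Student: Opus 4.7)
The plan is to handle the case $n=1$ directly, paralleling the proof of Corollary \ref{corollary:tutti} with simpler combinatorics. In $\mathbb R$ every rational hyperplane is a singleton $\{q\}$ with $q\in\mathbb Q$, so $F_x = \mathbb R$ when $x$ is irrational and $F_x = \{x\}$ when $x$ is rational. In the irrational case the regular $1$-simplex $\conv(0,1)\subseteq F_x$ gives $c_x = 1$; in the rational case $x = p/d$ (in lowest terms), Definition \ref{definition:cx} together with Lemma \ref{lemma:regular-simplex} identifies $c_x$ as the smallest $c$ for which some $q/c$ satisfies $|pc-qd| = 1$, so $c_x = \den(\comp(p/d))$. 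Lemma \ref{lemma:preconverse} then yields $c_x = 1$ for $d \leq 4$, and $c_x < d/2$ with $\gcd(c_x,d) = 1$ for $d \geq 5$.

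For case (i) I would verify the orbit equivalence directly: if $x$ is irrational and $G_x = G_y$, then $y = a + bx$ and $x = a' + b'y$ for integers $a,b,a',b'$, and substitution forces $bb' = 1$, whence $b = \pm 1$ and $y = a \pm x \in \orb(x)$. The converse is immediate from $G_{\pm x + t} = G_x$.

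For cases (ii) and (iii) I would exploit the fact that $\mathcal G_1$ acts on $\mathbb R$ by $x \mapsto \pm x + t$, so two rationals $p/d$ and $p'/d'$ in lowest terms share a $\mathcal G_1$-orbit iff $d = d'$ and $p' \equiv \pm p \pmod d$. Meanwhile $G_{p/d} = \frac{1}{d}\mathbb Z$ depends only on $d$, and the condition $pc \equiv \pm 1 \pmod d$ from the previous paragraph shows that the assignment $p \mapsto c_x$ from $(\mathbb Z/d\mathbb Z)^\times$ to $\{c : 1 \leq c < d/2,\ \gcd(c,d) = 1\}$ is exactly $2$-to-$1$ with fibers $\{p,-p\}$. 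This matches the $\mathcal G_1$-orbit partition exactly, so $(G_x,c_x)$ is a complete invariant and the stated bijection, together with the count $\max(1,\phi(d)/2)$, follows. Surjectivity of $p \mapsto c_x$, i.e., realization of every admissible $c$ as some $c_x$, is precisely Lemma \ref{lemma:companion}.

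The main obstacle is not a deep one: it is the verification that the fibers of $p \mapsto c_x$ really are $\{p,-p\}$, which is a brief modular-arithmetic check resting on $c^{-1} \not\equiv -c^{-1} \pmod d$ for $d > 2$ coprime to $c$. This is exactly the dichotomy that produces a single orbit when $d \leq 4$ and $\max(1,\phi(d)/2)$ orbits for $d \geq 5$. Beyond this, no techniques are required past those already invoked in the proofs of Theorem \ref{theorem:rank2} and Corollary \ref{corollary:tutti}.
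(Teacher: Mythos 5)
The paper states Corollary \ref{corollary:onedim} without any proof (Remark \ref{remark:generalize} merely defers the case $n=1$ to it), so there is no official argument to compare against; judged on its own, your proof is correct. Your reduction of Definition \ref{definition:cx} in the rational case is the right one: for $F_x=\{p/d\}$ the admissible regular $1$-simplexes are exactly the segments $\conv(p/d,q/c)$ with $|pc-qd|=1$, so $c_x=\min\{c\geq 1: pc\equiv\pm1\pmod d\}=\den(\comp(p/d))$, and Lemmas \ref{lemma:preconverse} and \ref{lemma:companion} then deliver the bounds on $c_x$ and the surjectivity onto $\{c: 1\leq c<d/2,\ \gcd(c,d)=1\}$ exactly as you use them. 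The ingredient that makes your route much lighter than the machinery of Theorem \ref{theorem:rank2} is the observation that $\mathcal G_1$ acts by $x\mapsto\pm x+t$, so orbit equivalence of rationals is literally $d=d'$ and $p'\equiv\pm p\pmod d$, and the whole classification collapses to checking that $p\mapsto\min(c_+,d-c_+)$, with $c_+\equiv p^{-1}\pmod d$, has fibers $\{p,-p\}$ on $(\mathbb Z/d\mathbb Z)^\times$; this replaces the regular-simplex constructions of Sections 4--5 by a few lines of modular arithmetic, which is entirely appropriate in dimension one. Two cosmetic points you may wish to make explicit: the minimum in Definition \ref{definition:cx} ranges over \emph{all} vertices, hence a priori also over $\den(v_0)=d$ itself, but this never wins because some admissible $c\leq d$ always exists; and in (i) and (ii) the quantifier ``for all $y\in\mathbb R$'' includes $y$ with $G_y$ of the other rank, which is disposed of by noting that both $G_y$ and membership in $\orb(x)$ detect rationality of $y$.
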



Concerning the topological properties of
$\orb(x),$  considerations from ergodic theory and
dynamics of flows in the homogeneous space
$SL(n,\mathbb R)/SL(n,\mathbb Z)$ lead to the well known
result  \cite[5.2]{dan} which, in the framework 
of the present paper, can be reformulated as follows:
{\it for each $n=2,3,\ldots$
and $x\in \Rn$,  $\orb(x)$ is dense iff
$\rank(G_x)\not=1$.}

 \section{Applications}

%

 \subsection*{Applications to lattice-ordered abelian groups with
 order unit}
In this section
we assume familiarity with lattice-ordered
abelian groups. We refer to \cite{bigkeiwol}  for background.
An {\it $\ell$-group}  (which is short for  ``lattice-ordered abelian group'')
is an abelian group $G$ equipped with a translation invariant lattice order.
If $G$ also has a distinguished (strong) {\it order unit},
i.e., an element   $u$ whose positive integer multiples eventually
dominate any element of $G$,   then  $(G,u)$ is known as
a {\it unital $\ell$-group}.  Morphisms in the category of unital
$\ell$-groups are
 {\it unital $\ell$-homomorphisms},
 i.e.,  group homomorphisms that also preserve the
lattice structure and the distinguished order unit.  
We write $(G,u)\cong(H,v)$ when $(G,u)$  and
$(H,v)$  are unitally $\ell$-isomorphic.

An {\it  ideal}  (``$\ell$-ideal'' in \cite{bigkeiwol})
 $\mathfrak i$  of $(G,u)$ is the kernel of a unital $\ell$-homomorphism
$\eta\colon (G,u)\to (H,v).$  In case  $\eta(G,u)$ is totally ordered, 
$\mathfrak i$ is said to be {\it   prime}  (``ir\-r\'e\-duc\-ti\-ble'' in \cite{bigkeiwol}).  
H\"older's  theorem,  \cite[2.6]{bigkeiwol}
 is to the effect that for every maximal ideal
$\mathfrak m$ of $(G,u)$   the quotient
$(G,u)/\mathfrak m$  is uniquely embeddable into
a unital $\ell$-subgroup of the additive group $\mathbb R$
equipped with the usual order and with the element 1 as the
order unit. The  set $\mu(G)$
 of maximal ideals of   $(G,u)$
is  a subset of the set of prime ideals. The hull-kernel topology
makes  $\mu(G,u)$ a   compact Hausdorff space,
\cite[13.2.6]{bigkeiwol}.  We then
say that  $\mu(G,u)$ is the {\it maximal spectral space} of
$(G,u)$.

Since the category of $\ell$-groups
is  equivalent to an equational class, \cite[3.9]{mun-jfa},
one can naturally introduce the free unital $\ell$-group $\mathcal F_n$ 
over $n$  free generators.  Every finitely generated
 unital $\ell$-group is
  (isomorphic to) the quotient $\mathcal F_n/\mathfrak i$
of some $\mathcal F_n$ by an ideal $\mathfrak i.$
The maximal spectral space of 
$\mathcal F_n$  is homeomorphic to the unit cube
$[0,1]^n$ with the product topology, so it makes sense to say that a maximal
ideal $\mathfrak m$ of $\mathcal F_n$ ``lies in the interior''
of $  \mu(\mathcal F_n)$.  See \cite[Proposition 3(i)]{mun-dcds} for details.  
As a particular case of  \cite[10.5.3]{bigkeiwol},  for every
maximal ideal $\mathfrak m$  of $(G,u)$ by the {\it germinal} ideal
$\mathfrak o(\mathfrak m)$,  we mean  the
intersection of all
prime ideals $\mathfrak p$  of $(G,u)$
contained in $\mathfrak m.$

\begin{theorem}
\label{theorem:ell}
Let $\mathcal F_2$  be the free unital
  $\ell$-group on two generators.
Let $\mathfrak m$  be a maximal ideal of 
$\mathcal F_2$ lying in the interior of
$\mu(\mathcal F_2)$.
Write  $G_\mathfrak m$ for  the underlying group of the
unital $\ell$-group  $\mathcal F_2/\mathfrak m.$

\begin{itemize}
\item[(i)]  $G_\mathfrak m$ is a subgroup
of the additive group
$\mathbb R$ 
of rank $\leq 3$ containing $1$.

\smallskip
\item[(ii)] 
If
$\rank(G_\mathfrak m)=3$  then
for  every maximal $\ell$-ideal $\mathfrak n$, 
$$
\mathcal F_2/\mathfrak m\cong \mathcal F_2/\mathfrak n
\Leftrightarrow
\mathcal F_2/\mathfrak o(\mathfrak m)\cong \mathcal F_2/\mathfrak o
(\mathfrak n).
$$ 
If
$\rank(G_\mathfrak m)$ equals $1$  then
for  every maximal $\ell$-ideal $\mathfrak n$
 lying in the interior of
$\mu(\mathcal F_2)$, 
$$
\mathcal F_2/\mathfrak m\cong \mathcal F_2/\mathfrak n
\Leftrightarrow
\mathcal F_2/\mathfrak o(\mathfrak m)\cong \mathcal F_2/\mathfrak o
(\mathfrak n).
$$ 

\smallskip
\item[(iii)]  If
$\rank(G_\mathfrak m)=2$,
let  $d$ be  the denominator of
the smallest positive
nonzero rational in  $G_\mathfrak m.$
Then the set    of isomorphism classes
of  unital $\ell$-groups of the form  $\mathcal F_2/\mathfrak o
(\mathfrak n),$ for $\mathfrak n$ a maximal ideal
 in the interior of $\mu(\mathcal F_2)$ such 
 that  $\mathcal F_2/\mathfrak n\cong \mathcal F_2/\mathfrak m,$
contains exactly  $\max\left(1,\frac{\phi(d)}{2}\right)$ elements.    
\end{itemize}
\end{theorem}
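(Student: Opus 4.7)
The plan is to reduce the statement to the $\mathcal G_2$-orbit classification obtained earlier via the standard identification of $\mu(\mathcal F_2)$ with $[0,1]^2$. Under the McNaughton-type representation of $\mathcal F_2$ as continuous piecewise-linear functions on $[0,1]^2$ with integer coefficients (see \cite[Proposition 3]{mun-dcds} and \cite{mun-jfa}), every maximal ideal $\mathfrak m$ is the kernel of evaluation at a unique point $x=(x_1,x_2)\in [0,1]^2$; under this identification, ``$\mathfrak m$ lies in the interior of $\mu(\mathcal F_2)$'' becomes ``$x\in (0,1)^2$''. By H\"older's theorem, $\mathcal F_2/\mathfrak m$ is uniquely unital $\ell$-embedded into $(\mathbb R,1)$ with image the subgroup $\mathbb Z + \mathbb Z x_1 + \mathbb Z x_2 = G_x$. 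This gives (i) and also shows that $\mathcal F_2/\mathfrak m\cong \mathcal F_2/\mathfrak n$ iff $G_x=G_y$, where $y\in(0,1)^2$ corresponds to $\mathfrak n$.

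The crucial link between the $\ell$-group side and the orbit side is the equivalence
\[
\mathcal F_2/\mathfrak o(\mathfrak m) \cong \mathcal F_2/\mathfrak o(\mathfrak n) \iff \orb(x)=\orb(y)
\]
for $x,y\in(0,1)^2$. For ($\Leftarrow$), any $\gamma\in\mathcal G_2$ with $\gamma(x)=y$ induces, by composition with McNaughton functions, a unital $\ell$-automorphism of $\mathcal F_2$ sending $\mathfrak m$ to $\mathfrak n$; this automorphism carries the set of primes below $\mathfrak m$ bijectively to the primes below $\mathfrak n$, and hence $\mathfrak o(\mathfrak m)$ to $\mathfrak o(\mathfrak n)$. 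For ($\Rightarrow$), which is the main obstacle, one uses that the primes of $\mathcal F_2$ contained in $\mathfrak m$ are in bijection with the cones of the local regular fan at $x$, so that $\mathcal F_2/\mathfrak o(\mathfrak m)$ encodes this fan in an affinely invariant way. A unital $\ell$-isomorphism of germinal quotients must therefore transport the local fan at $x$ to the local fan at $y$ compatibly with the integer affine structure, and by Lemma \ref{lemma:maps} this transport extends to a global $\gamma\in\mathcal G_2$ with $\gamma(x)=y$.

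With this equivalence in hand, parts (ii) and (iii) follow at once. When $\rank(G_\mathfrak m)=3$, Proposition \ref{proposition:generic} gives $\orb(x)=\orb(y)\iff G_x=G_y$, so the two isomorphism conditions in (ii) coincide; when $\rank(G_\mathfrak m)=1$ and $\mathfrak n$ also lies in the interior, Proposition \ref{proposition:rank1} yields the same biconditional. When $\rank(G_\mathfrak m)=2$, Corollary \ref{corollary:tutti}(ii) partitions the set of points of $(0,1)^2$ sharing the group $G_\mathfrak m$ into exactly $\max(1,\phi(d)/2)$ $\mathcal G_2$-orbits, which via the displayed equivalence correspond to exactly $\max(1,\phi(d)/2)$ isomorphism classes of germinal quotients $\mathcal F_2/\mathfrak o(\mathfrak n)$ lying inside the fixed isomorphism class of $\mathcal F_2/\mathfrak m$. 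This yields (iii) and completes the plan, with the bulk of the work concentrated in the ($\Rightarrow$) direction of the displayed equivalence, where the rigidity of regular fans under unital $\ell$-isomorphism must be invoked from the polyhedral machinery of Section~2 and \cite{mun-dcds}.
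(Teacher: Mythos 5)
Your plan follows essentially the same route as the paper: identify $\mu(\mathcal F_2)$ with $[0,1]^2$ via the McNaughton/Yosida representation, use H\"older's theorem for (i), reduce isomorphism of germinal quotients $\mathcal F_2/\mathfrak o(\mathfrak m)$ to $\mathcal G_2$-orbit equivalence of the corresponding points, and then invoke Propositions \ref{proposition:generic} and \ref{proposition:rank1} and the rank-2 classification of Theorem \ref{theorem:rank2} and Corollary \ref{corollary:tutti}. The only notable divergences are that the paper disposes of the rank-3 case of (ii) directly (a piecewise linear function with integer coefficients vanishing at a totally irrational point vanishes on a whole neighbourhood, so $\mathfrak o(\mathfrak m)=\mathfrak m$ and no orbit machinery is needed, which also covers boundary $\mathfrak n$ without extra argument), and that your forward implication --- that an abstract unital $\ell$-isomorphism of germ algebras forces $\orb(x)=\orb(y)$ --- is asserted at roughly the same level of detail as the paper's own unproved ``claim'', so neither text spells out that rigidity argument in full.
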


\begin{proof}
By  \cite[Proposition 2]{mun-dcds},
  $\mathcal F_2$ has a concrete representation as
   the unital $\ell$-group
of all piecewise linear continuous  functions  $f\colon 
[0,1]^2	\to \mathbb R$, where each piece of $f$ is a linear
(affine) polynomial  with integer coefficients.
The constant function 1 is the distinguished order unit of
$\mathcal F_2$.  The operations of $\mathcal F_2$  are
pointwise addition, subtraction, $\max,$ and $\min$.
Via the Yosida correspondences
$\mathfrak n\mapsto x_\mathfrak n$ and 
$x\mapsto \mathfrak m_x$
of  \cite[Proposition 3(i)]{mun-dcds}
  we  further identify 
the maximal spectral space $\mu(\mathcal F_2)$ with
$[0,1]^2$, in such a way that the maximal ideal  $\mathfrak m_x$
  is the set of all functions vanishing
at $x_\mathfrak m$, and the point $x_\mathfrak n$  
is the intersection of
the zerosets of all functions in $\mathfrak n.$
For every $\mathfrak p\in \mu(\mathcal F_2),$
the quotient map $f\in \mathcal F_2\mapsto f/\mathfrak p$
amounts to evaluation at $x_\mathfrak p,$
$$
f/\mathfrak p = f(x_\mathfrak p).
$$
Then the germinal ideal $\mathfrak o(\mathfrak n_x)$
is the set of all functions vanishing on an open neighbor
of $x_\mathfrak p.$

 \smallskip

  (i) By  H\"older's theorem,  
  $G_\mathfrak m$  is the subgroup of $\mathbb R$
   generated by the unit 1 together with the coordinates
   $x_1,x_2$  of
  $x$. So $G_\mathfrak m=G_x$ and  $\rank(G_\mathfrak m)
  \in \{1,2,3\}.$ The desired
  conclusion now
   follows from \cite[Proposition 3(ii)-(iii)]{mun-dcds}.

\smallskip
(ii) In case  $\rank(G_\mathfrak m)=3,$ 
the point  $x_\mathfrak m$ is totally irrational and thus
automatically  lies in the interior of $\mu(\mathcal F_2)$.
Each function  $f\in \mathcal F_2$  vanishing at
$x_\mathfrak m$ also vanishes on an open neighborhood
of  $x_\mathfrak m$. Therefore, the maximal quotient
$\mathcal F_2/\mathfrak m$ coincides with the
germinal quotient
$\mathcal F_2/\mathfrak o(\mathfrak m)$, and
we are done.
 
In case $\rank(G_\mathfrak m)=1$,
one first proves the following claim:
{\sl There is an isomorphism $\iota\colon \mathcal F_2/\mathfrak o(\mathfrak m)\cong
\mathcal F_2/\mathfrak o(\mathfrak n)$ iff  ($\mathfrak n$ lies in
the interior of $\mu(\mathcal F_2)$  and)
there exist rational triangles $T$ and $U$ with 
$ x_{\mathfrak m}\in {\rm int}(T)$  and 
$y_{\mathfrak n}\in {\rm int}(U)$ and a
map $\gamma\in\mathcal G_2$ satisfying
$ \gamma(x_{\mathfrak m})=x_{\mathfrak n}$ and $\gamma(T)=U$.}
By  {  Proposition \ref{proposition:rank1},}
a necessary and sufficient condition for such $\gamma$ to
exist is that  $G_\mathfrak m=G_\mathfrak n$. 
By H\"older's theorem, this
 is in turn equivalent to saying that the unital $\ell$-groups
$ \mathcal F_2/\mathfrak m$ and 
$ \mathcal F_2/\mathfrak n$ are unitally $\ell$-isomorphic.

\smallskip
  (iii) From
 { Lemma   \ref{lemma:cx}(iii)}
it follows that, whenever  $\rank(G_x)=2$,  the number of
orbits  classified by
the same group  $G_\mathfrak m$
is   $\max(1,\phi(d)/2)$, and { by Theorem
\ref{theorem:rank2}} these orbits
are in one-one correspondence with the possible values of
the parameter  $c_{x_\mathfrak m}$ of
{ Definition \ref{definition:cx}.}
\end{proof}

\medskip
 \subsection*{Applications to the Farey-Stern-Brocot AF C$^*$-algebra}
A  {\it (unital)  AF C$^*$-algebra}  $\mathfrak A$ is the norm closure
of an ascending sequence of finite-dimensional C$^*$-algebras,
all with the same unit.
In this section we require
  acquaintance with AF C$^*$-algebras
 and Elliott classification  
 \cite{eff,goo1},
 $$
 \mathfrak A\mapsto (K_{0}({\mathfrak A}), [1_{\mathfrak A}]),
 $$
 of  unital AF $C^{*}$-algebras in terms of
 countable unital dimension groups.
Here,  $K_0$  is a suitable
 order-theoretic expansion of  Grothendieck's
functor, and $[1_{\mathfrak A}]$ is the
$K_0$-image of the unit of $\mathfrak A.$

The Murray-von Neumann order of projections
in $\mathfrak A$ is a lattice iff $K_{0}({\mathfrak A})$
is an $\ell$-group.
{ Combining Corollary  \ref{corollary:onedim}
with Elliott's classification, 
in Theorem \ref{theorem:boca} } 
we will describe the mutual
 relationships between  the maximal
and the germinal ideals of 
the  Farey-Stern-Brocot
 $C^{*}$-algebra  $\mathfrak M_{1}$  introduced
  by the present author  in  
 \cite{mun-adv}  and recently rediscovered by Boca
in \cite{boc}  (also see \cite{eck} and \cite{mun-mjm}).
The Bratteli diagram of $\mathfrak M_{1}$
 is as follows:

	\begin{center}
	\unitlength=2.5mm
	\begin{picture}(46,25)(0,0)

		\put(17.6,21){1}
			\put(21.6,21){1}
		\put(18,20){\circle*{0.4}}
		\put(22,20){\circle*{0.4}}
		
			\put(16,15){\circle*{0.4}}
\put(16,15){\line(2, 5){2}}

			\put(20,15){\circle*{0.4}}
			\put(20,15){\line(-2, 5){2}}
				\put(20,15){\line(2, 5){2}}
				
			\put(24,15){\circle*{0.4}}
				\put(24,15){\line(-2, 5){2}}
			
			\put(13,10){\circle*{0.4}}
			\put(13,10){\line(3, 5){3}}
			
			\put(17,10){\circle*{0.4}}
\put(17,10){\line(-1, 5){1}}
\put(17,10){\line(3, 5){3}}

					\put(20,10){\circle*{0.4}}
						\put(20,10){\line(0, 5){5}}
						
						\put(23,10){\circle*{0.4}}
\put(23,10){\line(1, 5){1}}
\put(23,10){\line(-3, 5){3}}
						
							\put(27,10){\circle*{0.4}}
							\put(27,10){\line(-3, 5){3}}
							
\put(10,5){\circle*{0.4}}
	\put(10,5){\line(3, 5){3}}
	\put(10,5){\line(-1, -2){1}}
\put(10,5){\line(1, -2){1}}
	
\put(13,5){\circle*{0.4}}
\put(13,5){\line(0, 5){5}}
\put(13,5){\line(4, 5){4}}
	\put(13,5){\line(-1, -1){2}}
\put(13,5){\line(0, -2){2}}
\put(13,5){\line(1, -1){2}}

	\put(16,5){\circle*{0.4}}
	\put(16,5){\line(1, 5){1}}
	\put(16,5){\line(-1, -2){1.0}}
\put(16,5){\line(0, -2){2}}
\put(16,5){\line(1, -2){1}}
	
	\put(18,5){\circle*{0.4}}
		\put(18,5){\line(-1, 5){1}}
			\put(18,5){\line(2, 5){2}}
	\put(18,5){\line(-1, -2){1}}
\put(18,5){\line(0, -2){2}}
\put(18,5){\line(1, -2){1}}
	
\put(20,5){\circle*{0.4}}
\put(20,5){\line(0, 5){5}}
	\put(20,5){\line(-1, -2){1}}
\put(20,5){\line(0, -2){2}}
\put(20,5){\line(1, -2){1}}

\put(22,5){\circle*{0.4}}
		\put(22,5){\line(-2, 5){2}}
			\put(22,5){\line(1, 5){1}}
	\put(22,5){\line(-1, -2){1}}
\put(22,5){\line(0, -2){2}}
\put(22,5){\line(1, -2){1}}

	\put(24,5){\circle*{0.4}}
		\put(24,5){\line(-1, 5){1}}
	\put(24,5){\line(-1, -2){1}}
\put(24,5){\line(0, -2){2}}
\put(24,5){\line(1, -2){1}}
	
	\put(27,5){\circle*{0.4}}
	\put(27,5){\line(0, 5){5}}
\put(27,5){\line(-4, 5){4}}
	\put(27,5){\line(-1, -1){2}}
\put(27,5){\line(0, -2){2}}
\put(27,5){\line(1, -1){2}}
	
\put(30,5){\circle*{0.4}}
\put(30,5){\line(-3, 5){3}}
\put(30,5){\line(-1,-2){1}}
\put(30,5){\line(1,-2){1}}
\put(8,2){\ldots\,\,\ldots\,\,
\ldots\,\, \ldots\,\, \ldots\,\,
\ldots\,\, \ldots\,\, \ldots\,\, \ldots}

		\end{picture}
	\end{center}
	
\noindent The two top  (depth 0) vertices 
are labeled 1.
The label  
of any vertex
$v$ at depth $d=1,2,\ldots, $  is the sum of the labels
of the vertices at depth  $d-1$ connected to $v$ by an edge.

As proved in \cite{mun-adv}, an
 equivalent redefinition of $\mathfrak M_1$ is as follows:
$$
(K_{0}({\mathfrak M_1}), [1_{\mathfrak M_1}]) = \mathcal F_1.
$$ 
Now, up to unital $\ell$-isomorphism, 
$ \mathcal F_1$ is the unital $\ell$-group of
continuous piecewise linear functions
$f\colon [0,1]\to \mathbb R$, where each
piece of $f$  is a linear polynomial with integer coefficients,
\cite[Proposition 2]{mun-dcds}.
The constant function 1 is the distinguished order unit of
$\mathcal F_1$. 
As a particular case of a general well known fact
\cite[\S 3]{mun-mjm},  the transformation
\begin{equation}
\label{equation:quotient}
\mathfrak P\mapsto K_0(\mathfrak P) 
\end{equation}
  induces an order
preserving one-one map of  the ideals    of
$\mathfrak M_1$  onto
the  ideals   of $ \mathcal F_1$, in such a way
that
\begin{equation}
\label{equation:ebbene}
K_0\left(\frac{(\mathfrak M_1, [1_{\mathfrak M_1}])}{\mathfrak P}
\right)=\frac{K_0(\mathfrak M_1, [1_{\mathfrak M_1}])}{K_0(\mathfrak P)}.
\end{equation}
In particular,  $K_0$ sends primitive  (resp., maximal) ideals of
$\mathfrak M_1$  one-one onto
prime (resp., maximal) ideals of  $ \mathcal F_1$.
For every maximal ideal $\mathfrak I$ of  
 of $\mathfrak M_1$, the {\it germinal ideal}
$\mathfrak O(\mathfrak I)$  of $\mathfrak M_1$ at   $\mathfrak I$ is the
 intersection of all primitive ideals of 
 $\mathfrak M_1$ contained in $\mathfrak I.$
 The $K_0$-image of  $\mathfrak O(\mathfrak I)$ is the
 germinal ideal  $\mathfrak o(K_0(\mathfrak I)).$

For each irrational $\theta$, the   AF C$^*$-algebra
$\mathfrak F_\theta$  is defined by
\begin{equation}
\label{equation:irrational}
K_0(\mathfrak F_\theta, [1_{\mathfrak F_\theta}])=
(\mathbb Z+\mathbb Z\theta, 1)
\end{equation}
i.e., the subgroup of the additive group $\mathbb R$ generated
by 1 and $\theta$, equipped with the usual order, and with
the distinguished order unit 1. See \cite[p. 34, and p.65]{eff} 
 for further information.

As proved in \cite[3.4]{mun-mjm}, the maximal ideal 
space of  $\mathfrak M_1$ equipped with the hull-kernel topology
is canonically  homeomorphic to the unit real interval $[0,1].$

\begin{theorem}
\label{theorem:boca} 
Suppose $\mathfrak I$
is a maximal ideal of $\mathfrak M_1$.
\begin{itemize}
\item[(i)] If  
  $\mathfrak M_1/\mathfrak I\cong \mathfrak F_\theta$ 
for some irrational $\theta\in[0,1]$,  then  
$\mathfrak M_1/\mathfrak I\cong\mathfrak M_1/\mathfrak O(\mathfrak I).$

\smallskip
\item[(ii)] 
 If    
 $\mathfrak M_1/\mathfrak I\not\cong\mathfrak F_\theta$
  for every  irrational $\theta \in[0,1]$,   then
$\mathfrak M_1/\mathfrak I$ is isomorphic to
 the C$^*$-algebra of  linear operators
on $d$-dimensional Hilbert space, for some $d=1,2,\ldots$.
Let $\mathcal I$  be
 the set  of isomorphism classes
of  AF C$^*$-algebras of the form  
$\mathfrak M_1/\mathfrak O(\mathfrak L)$, for
 $\mathfrak L$  an arbitrary
 maximal ideal of  $\mathfrak M_1$
with
 $\mathfrak M_1/\mathfrak L \cong \mathfrak M_1/\mathfrak I$.
 Then $\mathcal I$
has exactly  $\max(1,{\phi(d)}/{2})$ elements.
\end{itemize}
\end{theorem}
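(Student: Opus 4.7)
The plan is to transport the question from $\mathfrak{M}_1$ to $\mathcal{F}_1$ via $K_0$ and then read off both conclusions from Corollary \ref{corollary:onedim}. By the order-preserving bijection between ideals of $\mathfrak{M}_1$ and ideals of $\mathcal{F}_1$ recorded at (\ref{equation:quotient})--(\ref{equation:ebbene}), $K_0$ sends primitive (resp.\ maximal) ideals bijectively to prime (resp.\ maximal) ideals and commutes with quotients; consequently it also carries germinal ideals to germinal ideals, so $K_0(\mathfrak{O}(\mathfrak{I}))=\mathfrak{o}(K_0(\mathfrak{I}))$. Combining \cite[Proposition 3]{mun-dcds} for $n=1$ with the homeomorphism of \cite[3.4]{mun-mjm}, I would identify each maximal $\mathfrak{I}\subseteq\mathfrak{M}_1$ with a unique point $x\in[0,1]$ in such a way that the simple quotient $\mathcal{F}_1/K_0(\mathfrak{I})$ is precisely $(G_x,1)$ as a unital subgroup of $\mathbb R$. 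By H\"older's theorem $\rank(G_x)\in\{1,2\}$, and this dichotomy is exactly the one between cases (i) and (ii).

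For (i), the hypothesis $\mathfrak{M}_1/\mathfrak{I}\cong\mathfrak{F}_\theta$ forces $G_x=\mathbb Z+\mathbb Z\theta$ of rank $2$, so the associated $x$ is irrational. By \cite[Proposition 2]{mun-dcds}, $\mathcal{F}_1$ is the $\ell$-group of continuous piecewise linear functions $f\colon[0,1]\to\mathbb R$ with integer-coefficient linear pieces, whose breakpoints are automatically rational. Therefore any $f\in \mathfrak{m}_x$ is linear of the form $at+b$ with $a,b\in\mathbb Z$ on an open interval about $x$, and the $\mathbb Q$-linear independence of $1,x$ forces $a=b=0$; hence $f$ vanishes on a whole open neighborhood of $x$. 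This gives $\mathfrak{m}_x=\mathfrak{o}(\mathfrak{m}_x)$, which pulls back via $K_0^{-1}$ to $\mathfrak{I}=\mathfrak{O}(\mathfrak{I})$ and proves (i).

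For (ii), the hypothesis forces $\rank(G_x)=1$, so $x\in\mathbb Q\cap[0,1]$ and $G_x=\mathbb Z\frac{1}{d}$ with $d=\den(x)$. The first assertion of (ii) follows because $\mathfrak{M}_1/\mathfrak{I}$ is a simple unital AF algebra with dimension group $(\mathbb Z,d)$, and by Elliott's classification this algebra is $M_d(\mathbb C)$. For the count, $K_0$ puts $\mathcal{I}$ in bijection with the set of unital $\ell$-isomorphism classes of germinal quotients $\mathcal{F}_1/\mathfrak{o}(\mathfrak{m}_y)$ as $y$ ranges over $\mathbb Q\cap[0,1]$ with $G_y=G_x$; adapting to $n=1$ the equivalence \emph{isomorphism of germinal quotients $\Leftrightarrow$ equality of $\mathcal{G}_n$-orbits} used in the proof of Theorem \ref{theorem:ell}(ii), the cardinality of $\mathcal{I}$ equals the number of $\mathcal{G}_1$-orbits of points sharing that common $G_x$. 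Corollary \ref{corollary:onedim}(ii)--(iii) delivers exactly $\max(1,\phi(d)/2)$ such orbits.

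The delicate step will be the germinal-quotient/orbit equivalence in the rank $1$ case. The direction \textbf{isomorphism $\Rightarrow$ same orbit} is straightforward via the Yosida correspondence, because any unital $\ell$-isomorphism produces a $\mathcal{G}_1$-transformation that preserves denominators of rational points and, by Lemma \ref{lemma:regular-simplex}, preserves regularity of simplexes. The converse---building an actual unital $\ell$-isomorphism of germinal quotients from any $\gamma\in\mathcal{G}_1$ with $\gamma(y)=y'$---is where the real work lies; I would handle it by producing regular rational segments $T\ni y$ and $U\ni y'$ with $\gamma(T)=U$, using the one-dimensional analogue of the construction in Proposition \ref{proposition:rank1}, and then checking that each germinal quotient is already determined on any sufficiently small regular neighborhood, so that the restriction of $\gamma$ to such neighborhoods induces the desired isomorphism.
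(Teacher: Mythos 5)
Your proof follows essentially the same route as the paper's: part (i) by the observation that an integer-coefficient piecewise linear function vanishing at an irrational point vanishes on a neighborhood (so $\mathfrak m_\theta=\mathfrak o(\mathfrak m_\theta)$), and part (ii) by computing $K_0(\mathfrak M_1/\mathfrak I)=\mathbb Z\tfrac{1}{d}$, invoking Elliott classification, and reducing the count of germinal quotients to the $\mathcal G_1$-orbit count of Corollary \ref{corollary:onedim}. You actually spell out the germinal-quotient/orbit equivalence (the analogue of the claim in Theorem \ref{theorem:ell}(ii)) in more detail than the paper, which simply cites Corollary \ref{corollary:onedim}; the argument is correct.
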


\begin{proof} 
(i)   A routine verification shows that $K_0(\mathfrak I)$
is the set of all $f\in \mathcal F_1$ vanishing on $\theta.$ 
Since each linear piece of $f$ has integer coefficients
and $\theta$ is irrational, then $f$ automatically vanishes
on an open neighborhood of $\theta.$ So
$$
\mathcal F_1/\mathfrak m_\theta =
\mathcal F_1/\mathfrak o(\mathfrak m_\theta).
$$
Recalling (\ref{equation:quotient})-(\ref{equation:irrational}),  
we  conclude
$$
\mathfrak M_1/\mathfrak I\cong\mathfrak M_1/\mathfrak O(\mathfrak I).
$$

(ii)
Let $G$ be the underlying group
of   $K_0(\mathfrak M_1/\mathfrak I)$.
Let $d$ be the denominator of the smallest 
positive nonzero rational element of $G$.
{}From the complete description of
  $\mu(\mathfrak M_1)$ given 
in  \cite{boc} and \cite{mun-mjm}  it follows that 
$G$ has the form
$\mathbb Z\frac{1}{d}$, for some integer $d>0.$
An elementary calculation of $K_0$ 
shows that  $\mathfrak M_1/\mathfrak I$ is
 the C$^*$-algebra of  linear operators
on $d$-dimensional Hilbert space.
The rest now follows from
{  Corollary  \ref{corollary:onedim}}.
\end{proof}

\bigskip
\noindent{\bf Problem.} 
Prove or disprove:   For all $n=1,2,3,\dots$  and
$x,y\in \Rn,$
$\orb(x)=\orb(y)$
iff  $(G_x, c_x)=(G_y,c_y)$.

\bibliographystyle{plain}

\end{document}